\documentclass[11pt]{amsart}
\usepackage{amsmath, amsthm}
\usepackage{amssymb}
\usepackage{amscd}
\usepackage{url}
\usepackage{graphicx}
\usepackage{color}



\setlength{\textwidth}{16.25cm}
\setlength{\oddsidemargin}{0.5cm}
\setlength{\evensidemargin}{0.5cm}

\numberwithin{equation}{section}

\theoremstyle{plain}

\newtheorem{theorem}[subsection]{Theorem}
\newtheorem{proposition}[subsection]{Proposition}
\newtheorem{lemma}[subsection]{Lemma}
\newtheorem{corollary}[subsection]{Corollary}

\theoremstyle{definition}

\newcommand{\Q}{\mathbb{Q}}
\newcommand{\Z}{\mathbb Z}

\newcommand{\R}{\mathbb{R}}

\title{Lattice points in elliptic paraboloids}
\author{Fernando Chamizo}
\author{Carlos Pastor}
\thanks{The first author is partially supported by the MINECO grant MTM2014-56350-P. 
The second author has been supported by the ''la Caixa''-Severo Ochoa international PhD programme at the Instituto de Ciencias Matem\'aticas (CSIC-UAM-UC3M-UCM)}

\begin{document}

\begin{abstract}
We consider the lattice point problem corresponding to a family of elliptic paraboloids in $\R^d$ with $d\ge3$ and we prove the expected to be optimal exponent, improving previous results. 
This is especially noticeable for $d=3$ because the optimal exponent is conjectural even for the sphere.
We also treat some aspects of the case $d=2$, getting for a simple parabolic region an $\Omega$-result that is unknown for the classical circle  and divisor problems.
\end{abstract}

\maketitle

\pagestyle{plain} 

\section{Introduction}

Given a fixed compact subset $\mathcal{K}$ of $\mathbb{R}^d$ satisfying certain regularity conditions, the fundamental problem in lattice point theory consists in estimating
\begin{equation}\label{N_R}
\mathcal{N}(R)=
\#\big\{
\vec{n}\in\Z^d\;:\;
R^{-1}\vec{n}\in\mathcal{K}
\big\}
\end{equation}
for large values of $R$. The natural approximation is 
$|\mathcal{K}|R^d$, where
$|\mathcal{K}|$ stands for the volume of $\mathcal{K}$, and one is interested in the optimal exponent
\begin{equation}\label{alphak}
\alpha_\mathcal{K}
=
\inf
\big\{
\alpha>0\;:\; 
\mathcal{N}(R)=|\mathcal{K}|R^d+O\big(R^\alpha\big)
\big\}.
\end{equation}
For instance, when $\mathcal{K}$ is a convex body with smooth boundary and non-vanishing Gaussian curvature it is known $\alpha_{\mathcal{K}} \leq 131/208$ for $d = 2$, and $\alpha_{\mathcal{K}} \leq d - 2 + r(d)$ with $r(d) = 73/158$ for $d = 3$ and $r(d) = (d^2+3d+8)/(d^3 + d^2 +5d + 4)$ for $d \geq 4$ \cite{huxley, guo}.

Upper bounds for $\alpha_{\mathcal{K}}$ have been extensively studied for a great number of families of sets $\mathcal{K}$, but sharp results are scarce. Paradigmatic to the theory are the celebrated problems of Gauss and Dirichlet, dealing with the circle and a hyperbolic region of the plane (with extra conditions in the latter case to ensure compactness). The best known upper bound for both of them (also valid for ellipses) is the aforementioned $\alpha_{\mathcal{K}} \leq 131/208$, still far from Hardy's conjecture $\alpha_{\mathcal{K}} = 1/2$. The situation is not much better understood in three dimensions, where the best known upper bound for the sphere and the average of the class number (corresponding to a family of hyperboloids) is $\alpha_{\mathcal{K}} \leq 21/16$ \cite{HeBr, ChIw}, result extended to rational ellipsoids in \cite{ChCrUb}. The expected value of $\alpha_{\mathcal{K}}$ in these cases, and in general for $d \geq 3$, is $d-2$.

As the dimension increases the lattice point problem for the ball and rational ellipsoids becomes simpler, due to the higher regularity of the number of representations of integers by rational quadratic forms. This leads in a fairly easy manner to the sharp result $\alpha_{\mathcal{K}} = d-2$ for $d \geq 4$, contained in classical works. Less is known in the irrational case, where the inequality $\alpha_{\mathcal{K}} \leq d-2$ was finally achieved in \cite{BeGo1} for $d \geq 9$ and later in \cite{gotze} for $d \geq 5$. Surprisingly, the error term is, in contrast with the rational case, $o\big( R^{d-2} \big)$, and this led to a proof the Davenport-Lewis conjecture about the gaps of the image of $\mathbb{Z}^d$ under irrational quadratic forms \cite{BeGo2}. For further reference on these and other lattice point problems related to number theory questions we refer the reader to~\cite{IvKr}.

It is surprising the little attention attracted by the remaining conic, the parabola, and its higher-dimensional analogues. The bidimensional case was addressed by Popov \cite{popov}, proving the upper bound $\alpha_{\mathcal{K}} \leq 1/2$ for a variant of the lattice point problem $\mathcal{K} = \big\{ |y| \leq c - (x + \beta)^2 \big\}$, $c > 0$, and the sharp result $\alpha_{\mathcal{K}} = 1/2$ when $\beta = 0$ and $c \in \mathbb{Q}$. This remarkable difference with the Gauss and Dirichlet problems is a consequence of the fact that the error term for the parabola can be expressed in terms of $1$-dimensional quadratic exponential sums, and these can be finely estimated with simple Diophantine considerations. In fact in some particular cases this leads, via the evaluation of Gauss sums, to an explicit formula for the error term involving 
$L$-functions that seems to have been overlooked in the literature. We derive this formula for the case $c = 1, \beta = 0$ and show that it can be used to obtain a one-sided $\Omega$-result beyond what is known for both the circle and the hyperbola \cite{hafner, sound}, namely (see Proposition~\ref{omega2})
\[
 \mathcal{N}(R) - |\mathcal{K}|R^2 
 = 
 \Omega_{-}\big(
 R^{1/2}\exp(c\sqrt{\log R}/\log\log R)
 \big) 
 \qquad \text{for any } 
 \quad c < \sqrt{2}.
\]

In higher dimensions the natural set to study is that of elliptic paraboloids of the form
\begin{equation}\label{main_P}
\mathcal{P} = \big\{
(\vec{x},y)\in\R^{d-1}\times \R
\;:\;
|y|\le c-Q(\vec{x}+\vec{\beta})
\big\},
\end{equation}
where $Q$ is a positive definite quadratic form, $\vec{\beta}$ is a fixed vector in $\mathbb{R}^{d-1}$ and $c$ a positive constant. The particular case $\vec{\beta} = 0$ was considered in a slightly different form by Kr\"atzel \cite{kratzel1, kratzel2}, but his method only yielded the inequality $\alpha_{\mathcal{P}} \leq d-2$ under the strong assumptions $d \geq 5$ and $Q$ either rational or diagonal (proving, in the rational case, $\alpha_{\mathcal{P}} = d-2$ for $c \in \mathbb{Q}$). Partial results were given under weaker rationality assumptions in terms of the coefficient matrix $A = (a_{ij})$ of $Q$. In particular, Kr\"atzel obtained $\alpha_{\mathcal{P}} \leq d-5/3$ for $d \geq 3$ as long as $a_{12}/a_{11}, a_{22}/a_{11} \in \mathbb{Q}$. We improve these results:
\begin{theorem}\label{main}
If $a_{12}/a_{11}, a_{22}/a_{11} \in \mathbb{Q}$ then the inequality $\alpha_{\mathcal{P}} \leq d-2$ holds for any $d \geq 3$. If moreover $\vec{\beta} = 0$, $c \in \mathbb{Q}$ and $Q$ is rational then this is sharp.
\end{theorem}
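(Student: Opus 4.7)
My plan follows the classical harmonic-analysis strategy for sharp lattice point problems. Slicing $\mathcal{P}$ by the integer hyperplanes $x_d = k$ gives
\[
\mathcal{N}(R) = \sum_{\vec m \in \Z^{d-1}} \bigl( 2 \lfloor cR - Q(\vec m + R\vec\beta)/R \rfloor + 1 \bigr)_+,
\]
so, after subtracting the volume $|\mathcal{P}| R^d$ and using $\lfloor t \rfloor = t - \tfrac12 - \psi(t)$, the error term reduces (up to lower-order contributions from $\vec m$ with $Q(\vec m + R\vec\beta) \approx cR^2$) to
\[
E(R) = -2 \sum_{Q(\vec m + R\vec\beta)\le cR^2} \psi\!\Bigl( cR - \tfrac1R Q(\vec m + R\vec\beta) \Bigr).
\]
I would then replace $\psi$ by a Vaaler-type truncated Fourier expansion of height $H$, reducing the task to a uniform bound for the exponential sums
\[
S(h) = \sum_{\vec m \in \Z^{d-1}} \Phi(\vec m /R)\, e\!\Bigl( \tfrac{h}{R} Q(\vec m + R\vec\beta) \Bigr),\qquad 1 \le h \le H,
\]
where $\Phi$ is a smooth cut-off adapted to the region $\{Q(\vec x + \vec\beta) \le c\}$.

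The rationality hypothesis $a_{12}/a_{11}, a_{22}/a_{11} \in \Q$ enters decisively at this point. Writing $Q_0(x_1, x_2) = a_{11} x_1^2 + 2 a_{12} x_1 x_2 + a_{22} x_2^2 = a_{11} \tilde Q_0(x_1,x_2)$ with $\tilde Q_0$ rational of denominator $q$, and completing the square in $(x_1,x_2)$, the phase decomposes as
\[
\tfrac{h}{R}Q(\vec m + R\vec\beta) = \tfrac{h\, a_{11}}{R} \tilde Q_0\bigl(\vec m' + \vec v(m_3,\ldots,m_{d-1})\bigr) + G(m_3,\ldots,m_{d-1}),
\]
where $\vec m' = (m_1,m_2)$. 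The inner sum over $\vec m'$ is then a genuine two-dimensional quadratic exponential sum whose leading part has \emph{integer} coefficients (after clearing $q$), a feature which would fail without the rationality of $\tilde Q_0$. Approximating $h a_{11}/R$ by a rational fraction, applying Gauss-sum evaluation modulo $q$, and using Poisson summation on the residual scale produces square-root type cancellation in the inner sum. The hard part will be keeping this estimate uniform in $h$ and in the shift $\vec v$, since both the modulus of the rational approximation and the phase shift vary with them; this is exactly the point at which the integrality of $q\tilde Q_0$ is indispensable. The remaining $(d-3)$-fold sum over $m_3, \ldots, m_{d-1}$ is handled trivially, and balancing $H$ against the Fourier tail contribution $\ll R^{d-1}/H$ delivers $E(R) \ll R^{d-2}$, i.e.\ $\alpha_{\mathcal{P}} \le d-2$.

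For the sharpness statement, under the additional hypotheses $\vec\beta = 0$, $c \in \Q$ and $Q$ rational the whole form is rational, so every sum $S(h)$ becomes an exact Gauss sum times a Fourier transform, and the truncation in the Vaaler expansion can be removed. The estimate upgrades to a Voronoi-type explicit formula for $E(R)$ consisting of an absolutely convergent Bessel (or cosine) series of amplitude $R^{d-2}$. Restricting $R$ to a suitable arithmetic progression that aligns the sign of the leading term of this series yields $E(R) = \Omega(R^{d-2})$, which combined with the upper bound forces $\alpha_{\mathcal{P}} = d-2$.
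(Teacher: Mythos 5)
Your outline for the upper bound matches the paper's general strategy (slice off the last coordinate, expand $\psi$ via Vaaler polynomials, reduce to a two-variable quadratic exponential sum whose leading $2\times 2$ block is rational), but it passes over the step that the paper identifies as the actual crux. After the Vaaler expansion, the sum over $(m_1,m_2)$ runs over a \emph{sharp elliptical region} $Q(\vec m + R\vec\beta)\le cR^2$ (intersected with the slice), and your smooth cut-off $\Phi$ does not dispose of this: to keep the error from the transition region below $O(R^{d-2})$ the cut-off must decay over a width $O(1)$ in $\vec m$, at which point Poisson summation against $\Phi$ gives essentially no more than the sharp cut-off does. The paper devotes all of \S3 to exactly this point (``In principle it is not clear whether the square $\mathcal{C}$ can be replaced by a circle or ellipse''), resolving it by writing the elliptically truncated sum as an exact convolution of the full theta series $\theta_{\vec v}(y+i/N)$ against a one-dimensional kernel, bounding $\theta_{\vec v}$ on each Farey arc via the modular transformation formula, and then estimating the convolution arc by arc (Proposition~\ref{circular}); this is what the authors call a toy circle method, and it is the new ingredient. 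A second gap: ``square-root type cancellation uniform in $h$'' is false --- for the $h$ with $h a_{11}/R$ close to a fraction of small denominator $q$ the saving degrades to a factor $q^{-1}$ --- so one cannot simply balance $H$ against the tail $R^{d-1}/H$. The paper's \S4 needs the full $q$-dependent bound of Proposition~\ref{circular} plus a divisor-counting argument over $h$ (how often the associated Farey denominator is small) to sum the dyadic blocks; without that the argument does not close. Your reduction of $d>3$ to $d=3$ by fixing $(m_3,\dots,m_{d-1})$ is fine in spirit, and correctly flags that uniformity in the shift (and in the effective $c$) is the issue; the paper formalizes this by proving Theorem~\ref{main2} uniformly in $c$ and $\vec\beta$.

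For sharpness you propose a genuinely different route (an exact Voronoi-type expansion plus sign alignment along a progression), but as stated it is not a proof: for $d=3$ the claimed ``absolutely convergent series of amplitude $R^{d-2}$'' is not available off the shelf, and extracting a sign from it would require precisely the arithmetic information you are trying to avoid. The paper's argument is more elementary and robust: it produces $\Omega(R^{d-2})$ lattice points \emph{on the boundary} of $R\mathcal P$ (Proposition~\ref{boundary}), so that $\mathcal N(R)$ has a jump of that size and the error term cannot be $o(R^{d-2})$. Even this requires nontrivial input --- lower bounds for $r_{Q_1}(Rn)$ via Iwaniec's estimates for $d\ge 5$, Duke's theorem on ternary forms together with a character-sum computation for $d=4$, and a restriction of $R$ to squares for $d=3$ --- none of which is visible in your sketch. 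You should either supply these representation-number lower bounds or switch to the boundary-point argument.
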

Note that no assumptions are imposed on the remaining coefficients, and therefore this result extends the upper bound $\alpha_{\mathcal{P}} \leq d-2$ not only to $d = 3, 4$ and $\vec{\beta} \neq 0$, but also to a wider family of higher-dimensional paraboloids. The key step in the proof is the estimation of a certain quadratic exponential sum in two variables, which is done employing what can be considered a toy version of the circle method. Bounds this precise are out of reach for the exponential sums arising in most lattice point problems, and this accounts for the striking difference between our theorem and what is currently known for ellipsoids and hyperboloids. In fact, to the best of our knowledge, Theorem~\ref{main} constitutes the first sharp result for a lattice point problem in three dimensions.

\

The structure of the paper is as follows: First we revisit the two-dimensional case, deriving the exact formula for the error term. After this we devote \S3 to estimate the quadratic exponential sum involved in the proof of the first part of Theorem~\ref{main}, and \S4 to the proof itself. Finally in \S5 we prove some $\Omega$-results that readily imply the second part of Theorem~\ref{main}, together with more precise two-sided $\Omega$-results for the parabolic region considered in \S2.

\

Along this paper, $e(x)$ is an abbreviation for $e^{2\pi i x}$ and $\epsilon$ denotes an arbitrarily small positive quantity,  that may change in value at each appearance. We employ $f(x)=O\big(g(x)\big)$ and $f(x)\ll g(x)$ indistinctly to mean that $\limsup |f|/|g|<\infty$ and $f(x)=o\big(g(x)\big)$ meaning $\lim f/g=0$. The negation of the latter is denoted by $f(x) = \Omega\big(g(x)\big)$, while the symbol $\Omega_+$ (or $\Omega_-$) is employed to specify that the positive (or negative) part of $f(x)$ is $\Omega\big(g(x)\big)$.

\section{A parabolic region}

Popov gave in \cite{popov} an asymptotic for the number of lattice points under a parabola, i.e. in the region $\big\{0 \leq y \leq ax^2 \leq c\big\}$ of the XY plane. His method readily applies to the region of the form $\big\{ |y| \leq c - (x + \beta)^2 \big\}$ mentioned in the introduction, yielding $\alpha_{\mathcal{K}} \leq 1/2$. In fact it is possible to give a very short proof of this result in few lines appealing to classic estimates of quadratic sums \cite[Th.8.11]{IwKo}. 
When the dilation is integral these quadratic sums can be evaluated explicitly, and the resulting formula relates the error term for this lattice point problem to the class number associated to a family of imaginary quadratic fields.
We credit Professor Antonio C\'{o}rdoba for pointing out the relation with the class number in the early 90's while he was the Ph.D. advisor of the first named author. 

\

To illustrate the situation, we consider in this section a simple parabolic region
\[
 \mathcal{P}_2=
 \big\{
 (x,y)\in\R^2\;:\; |y| \le 1-x^2
 \big\},
\]
and denote by $\mathcal{N}_2(R)$ the number of lattice points in $\mathcal{P}_2$ scaled by $R$.

\begin{theorem}\label{int_para}
Let $N$ be an odd positive integer and let $N^*$ be the greatest square dividing $N$. Then
\[
 \mathcal{N}_2(N)=|\mathcal{P}_2|N^2
 +
 \frac 13
 +2\sqrt{N^*}
 -\frac{4}{\pi}
 \sum_{\substack{d\mid N\\ d\equiv3\ (4)}}
 \sqrt{d}L(1,\chi_{-d})
\]
where $L(1,\chi_{-d})$ is the $L$-function corresponding to the Kronecker symbol
$\chi_{-d}=\Big(\frac{-d}{\cdot}\Big)$. 
\end{theorem}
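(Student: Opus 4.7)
The plan is to reduce the count to a one-dimensional fractional-part sum and then evaluate that sum via Gauss sums. I slice $\mathcal{P}_2$ vertically: for each integer $n \in [-N,N]$, the integers $m$ with $(n/N,m/N)\in\mathcal{P}_2$ are those satisfying $|m|\le N-n^2/N$, of which there are $2\lfloor N-n^2/N\rfloor + 1$. Summing over $n$, using $\lfloor x\rfloor = x - \{x\}$ to separate the volume main term, and pairing $n$ with $-n$ yields
\[
\mathcal{N}_2(N) \;=\; \tfrac{8}{3}N^2 + 2N + \tfrac{1}{3} \;-\; 4\sum_{n=0}^{N-1}\{-n^2/N\}.
\]

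To handle the fractional-part sum I would invoke the Fourier series of the sawtooth $\psi(x):=\{x\}-\tfrac{1}{2}$ (set to $0$ at integers), namely $\psi(x) = -\sum_{k\ne 0}e(kx)/(2\pi i k)$. Substituting $x=-n^2/N$, interchanging summations, and grouping $k$ by its residue mod $N$ reduces the inner sum to the classical quadratic Gauss sum $S(k,N) := \sum_{n\bmod N}e(-kn^2/N)$, while the outer summation produces a cotangent factor via the partial-fraction identity $\sum_{m\in\Z}(m+x)^{-1} = \pi\cot(\pi x)$ (interpreted as a principal value). The indices $n\in\{0,\dots,N-1\}$ with $N\mid n^2$---exactly $\sqrt{N^*}$ of them, as a short elementary count shows---contribute the constant $2\sqrt{N^*}$, and collecting everything gives
\[
\mathcal{N}_2(N) \;=\; \tfrac{8}{3}N^2 + \tfrac{1}{3} + 2\sqrt{N^*} \;+\; \frac{2}{iN}\sum_{k=1}^{N-1} S(k,N)\cot(\pi k/N).
\]

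For the remaining sum I would stratify according to $d=\gcd(k,N)$, writing $k=d\ell$ and $N=dM$ with $\gcd(\ell,M)=1$. The standard Gauss-sum evaluation for odd $M$ gives $S(k,N)=d\,\varepsilon_M\sqrt{M}\,\bigl(\tfrac{-\ell}{M}\bigr)$, where $\varepsilon_M\in\{1,i\}$ according to $M\pmod 4$. For divisors with $M\equiv 1\pmod 4$ the inner sum over $\ell$ vanishes by parity, since $\ell\mapsto\bigl(\tfrac{-\ell}{M}\bigr)$ is then even under $\ell\mapsto M-\ell$ while $\cot(\pi\ell/M)$ is odd. For $M\equiv 3\pmod 4$, quadratic reciprocity (together with the analogous identity at $\ell=2$) gives $\bigl(\tfrac{-\ell}{M}\bigr) = -\chi_{-M}(\ell)$ whenever $\gcd(\ell,M)=1$, where $\chi_{-M}$ is the Kronecker symbol regarded as a Dirichlet character mod $M$---well defined because $-M\equiv 1\pmod 4$. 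The Hurwitz-zeta identity $L(1,\chi) = \tfrac{\pi}{2q}\sum_{a=1}^{q-1}\chi(a)\cot(\pi a/q)$, valid for any odd Dirichlet character mod $q$, then evaluates the inner sum as $-\tfrac{2M}{\pi}L(1,\chi_{-M})$, and reassembly produces the stated formula.

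The main obstacle I anticipate is the careful bookkeeping of Gauss-sum phases---the signs arising from $\bigl(\tfrac{-1}{M}\bigr)$, the factor $\varepsilon_M$, and the matching of the Jacobi symbol $\bigl(\tfrac{\cdot}{M}\bigr)$ with the Kronecker character $\chi_{-M}$ via reciprocity---together with the fact that when $M$ is not squarefree $\chi_{-M}$ becomes imprimitive mod $M$. This last point is harmless, since the Hurwitz identity applies verbatim to any odd Dirichlet character (primitive or not), so no corrective Euler factors at ramified primes intrude on the final summation.
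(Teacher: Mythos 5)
Your argument is correct and follows essentially the same route as the paper: vertical slicing to reduce the error term to $\sum_n \psi(-n^2/N)$, the sawtooth Fourier expansion with the $\sqrt{N^*}$ correction at the $\sqrt{N^*}$ integer points, evaluation of the quadratic Gauss sums stratified by $\gcd(k,N)$, and quadratic reciprocity to identify the resulting Jacobi symbol with the Kronecker character $\chi_{-d}$. The only (harmless) divergence is that you group the frequencies into residue classes mod $N$ first, producing a finite cotangent sum evaluated via $L(1,\chi)=\frac{\pi}{2q}\sum_{a}\chi(a)\cot(\pi a/q)$, whereas the paper keeps the infinite sum over frequencies and recognizes the inner sum directly as the Dirichlet series $L(1,\chi_{-d})$.
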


With some effort the result can be extended, with modifications, to cover the even case. 

\

Two particular cases of Theorem~\ref{int_para} deserve special attention. They will be used in~\S5 to obtain one-sided $\Omega$-results for this lattice point problem.

\begin{corollary}\label{prime4k1}
If the prime factors of $N$ are of the form $4k+1$, then  
\[
 \mathcal{N}_2(N)-|\mathcal{P}_2|N^2
 =
 \frac 13
 +2\sqrt{N^*}.
\]
\end{corollary}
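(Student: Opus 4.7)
The plan is to deduce the corollary as an immediate consequence of Theorem~\ref{int_para} by showing that, under the hypothesis on the prime factorization of $N$, the sum over divisors $d \equiv 3 \pmod 4$ in that formula is vacuous.

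First I would verify that the hypotheses of Theorem~\ref{int_para} are in force. Since every prime factor of $N$ is of the form $4k+1$, in particular $2 \nmid N$, so $N$ is odd and Theorem~\ref{int_para} applies.

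Next I would exploit the multiplicativity of the residue class modulo $4$. Write $N = p_1^{a_1} \cdots p_r^{a_r}$ with each $p_i \equiv 1 \pmod 4$. Any divisor $d$ of $N$ has the form $d = p_1^{b_1} \cdots p_r^{b_r}$ with $0 \le b_i \le a_i$, so
\[
 d \equiv \prod_{i=1}^r p_i^{b_i} \equiv \prod_{i=1}^r 1^{b_i} \equiv 1 \pmod 4.
\]
Therefore no divisor of $N$ satisfies $d \equiv 3 \pmod 4$, the sum on the right-hand side of Theorem~\ref{int_para} is empty, and the claimed identity follows. There is essentially no obstacle here; the corollary is a direct extraction of the vanishing of the $L$-function term from the exact formula.
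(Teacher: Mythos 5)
Your proof is correct and is exactly the argument the paper intends: the paper states this corollary without proof as an immediate consequence of Theorem~\ref{int_para}, the point being precisely that every divisor of such an $N$ is a product of primes $\equiv 1\pmod 4$ and hence $\equiv 1\pmod 4$, so the sum over $d\equiv 3\pmod 4$ is empty. Your additional check that $N$ is odd (so the theorem applies) is a sensible, if routine, inclusion.
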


\begin{corollary}\label{classn}
If $N$ is squarefree then  
\[
 \mathcal{N}_2(N)-|\mathcal{P}_2|N^2
 =
 \frac 73
 -4
 \sum_{\substack{d\mid N\\ d\equiv3\ (4)}}
 \omega_d h(-d)
\]
where $h(-d)$ is the class number of the integer ring of $\Q(\sqrt{-d})$ and $\omega_d=1$ except for $\omega_{3}=1/3$. 
\end{corollary}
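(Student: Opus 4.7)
The proof is a direct consequence of Theorem~\ref{int_para} combined with Dirichlet's class number formula, so the plan is mostly bookkeeping. First, since $N$ is (odd) squarefree, we have $N^* = 1$, so the main term on the right-hand side of Theorem~\ref{int_para} simplifies as $\frac{1}{3} + 2\sqrt{N^*} = \frac{1}{3} + 2 = \frac{7}{3}$, which matches the constant in the statement.

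Next, I would reinterpret the sum over divisors using the class number formula. For $d$ a positive squarefree integer with $d \equiv 3 \pmod 4$, the integer $-d$ satisfies $-d \equiv 1 \pmod 4$ and hence is the fundamental discriminant of the imaginary quadratic field $\mathbb{Q}(\sqrt{-d})$. The Kronecker symbol $\chi_{-d}$ is then the primitive quadratic character of conductor $d$ attached to this field, and the analytic class number formula yields
\[
L(1,\chi_{-d}) = \frac{2\pi\, h(-d)}{w_d \sqrt{d}},
\]
where $w_d$ denotes the number of roots of unity in the ring of integers of $\mathbb{Q}(\sqrt{-d})$. Since all $d \equiv 3 \pmod 4$ exceed $2$, we have $w_d = 2$ for every such $d > 3$, and $w_3 = 6$ for the exceptional field $\mathbb{Q}(\sqrt{-3})$.

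Multiplying across by $\sqrt{d}$, this reads $\sqrt{d}\,L(1,\chi_{-d}) = \pi\, \omega_d\, h(-d)$, with $\omega_d = 2/w_d$, that is $\omega_d = 1$ for $d > 3$ and $\omega_3 = 1/3$, exactly the weights appearing in the corollary. Substituting this identity into the sum over divisors $d\mid N$ with $d\equiv 3\pmod 4$ in Theorem~\ref{int_para} produces
\[
\frac{4}{\pi} \sum_{\substack{d\mid N\\ d\equiv 3\ (4)}} \sqrt{d}\, L(1,\chi_{-d}) = 4 \sum_{\substack{d\mid N\\ d\equiv 3\ (4)}} \omega_d\, h(-d),
\]
and combining with the simplified main term yields the claimed identity. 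There is no real obstacle here; the only point requiring a bit of attention is the special value $d=3$, which is handled by the extra roots of unity in $\mathbb{Q}(\sqrt{-3})$ and accounts for the exceptional weight $\omega_3 = 1/3$.
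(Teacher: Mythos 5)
Your proof is correct and follows exactly the paper's (one-line) argument: apply Dirichlet's class number formula for the fundamental discriminant $-d$ in Theorem~\ref{int_para}, with the weight $\omega_3=1/3$ coming from the six roots of unity in $\Q(\sqrt{-3})$. The bookkeeping ($N^*=1$, $\sqrt{d}\,L(1,\chi_{-d})=\pi\omega_d h(-d)$) is all accurate.
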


\begin{proof}
Apply Dirichlet class number formula in  Theorem~\ref{int_para} for the fundamental discriminant~$-d$.
\end{proof}

\begin{proof}[Proof of Theorem~\ref{int_para}]
Writing $\psi(x)=x-\lfloor x\rfloor -1/2$,
\[
 \mathcal{N}_2(N)
 =
 2
 \sum_{n=-N}^N
 \Big(
 N-\frac{n^2}{N}
 \Big)
 -2
 \sum_{n=-N}^N
 \psi\Big(
 -\frac{n^2}{N}
 \Big).
\]
The first sum is $(4N^2-1)/3$ and the area is clearly $|\mathcal{P}_2|=8/3$. Then
\[
 \mathcal{N}_2(N)-|\mathcal{P}_2|N^2
 =
 -\frac 23
 -2
 \sum_{n=-N}^N
 \psi\Big(
 -\frac{n^2}{N}
 \Big)
 =
 \frac 13
 -4
 \sum_{n=1}^N
 \psi\Big(
 -\frac{n^2}{N}
 \Big).
\]
We now substitute
\[
 \psi(x)
 =
 \Im
 \sum_{m=1}^\infty
 \frac{e(-mx)}{\pi m}
 +\begin{cases}
   0 &\text{if}\quad x\not\in\Z,
   \\
   -1/2 &\text{if}\quad x\in\Z.
  \end{cases}
\]
Note that $N$ divides $n^2$ exactly $\sqrt{N^*}$ times in the range $1\le n\le N$, and hence
\begin{equation}\label{aux_para}
 \mathcal{N}_2(N)-|\mathcal{P}_2|N^2
 =
 \frac 13
 +2\sqrt{N^*}
 -\frac{4}{\pi}
 \sum_{m=1}^\infty
 \frac{1}{m}
 \Im G(m;N)
\end{equation}
where $G(m;N)$ is the quadratic Gauss sum $\sum_{n=1}^N e\big(mn^2/N\big)$. Let $d_m=N/\gcd(m,N)$, the evaluation of $\Im G(m;N)$ reads \cite{IwKo}
\[
 \Im G(m;N)
 =
 \begin{cases}
  0&\text{if}\quad d_m\equiv 1\pmod{4},
  \\
  \frac{N}{\sqrt{d_m}}
  \Big(
  \frac{md_m/N}{d_m}
  \Big)&\text{if}\quad d_m\equiv 3\pmod{4}.
 \end{cases}
\]
Substituting in \eqref{aux_para} and noting that when $d_m$ is fixed $md_m/N$ runs over all positive integers coprime to $d_m$, we have 
\[
 \mathcal{N}_2(N)-|\mathcal{P}_2|N^2
 =
 \frac 13
 +2\sqrt{N^*}
 -\frac{4}{\pi}
 \sum_{\substack{d\mid N\\ d\equiv3\ (4)}}
 \sqrt{d}
 \sum_{m=1}^\infty
 \frac{1}{m}
 \Big(\frac{m}{d}\Big).
\]
By the quadratic reciprocity law for the Jacobi-Kronecker symbol \cite[\S3.5]{IwKo}, the innermost sum equals $L(1,\chi_{-d})$. 
\end{proof}

\section{Elliptical summation}

Consider the well known Hardy-Littlewood bound\footnote{Although this result is implicit in the work \cite{HaLi}, we refer the reader to \cite{fiedler} for a closer statement.}
\begin{equation}\label{hali}
\sum_{n=-N}^N e(n^2 x)\ll q^{-1/2}N
  \qquad\text{if }\quad
 \Big|x-\frac aq\Big|\le \frac{1}{qN}
 \quad\text{with}\quad q\le N.
\end{equation}
Squaring this formula we have
\[
\mathop{\sum\!\sum}_{(n,m)\in \mathcal{C}}
e\big((n^2+m^2) x\big)\ll q^{-1}N^2
  \qquad\text{with }\quad
  \mathcal{C}=[-N,N]\times [-N,N].
\]
In principle it is not clear whether the square $\mathcal{C}$ can be replaced by a circle or ellipse. This is forced in our approach (with extra linear terms) to keep the symmetry. Proposition~\ref{circular} below shows that this is possible losing at most a power of logarithm.
The problem was addressed in \cite{kratzel1} and \cite{kratzel2} via the estimation of 2-dimensional exponential sums getting a weaker result, and recently in \cite{HeHu} when the sum has a certain smooth cut-off.
Here we employ a simplified version of the circle method, taking advantage of the fact that only upper bounds are required on any arc.

For convenience, instead of the condition in \eqref{hali} we consider the Farey dissection of the continuum 
\[
\R= \bigcup_{a/q}\mathcal{A}_{a/q}
\qquad\text{with }\quad
\mathcal{A}_{a/q}=
\Big[
\frac{a+a^-}{q+q^-}
,
\frac{a+a^+}{q+q^+}
\Big)
\]
where $a^-/q^-<a/q<a^+/q^+$ are consecutive fractions in the Farey sequence of a fixed order, extended periodically. In our case we take the order to be $\lfloor N^{1/2}\rfloor$. In this way we can assign to each~$x$  a unique $a_x/q_x$ such that
\begin{equation}\label{farey_a}
x\in 
\mathcal{A}_{a_x/q_x}
\qquad\text{with }\quad
q_x\le N^{1/2}.
\end{equation}

\begin{proposition}\label{circular}
Let $Q$ be an integral positive definite binary quadratic form and $\alpha,\beta$ arbitrary real numbers. Then for every $N\ge 2$ and $x\in\R$ satisfying \eqref{farey_a}, we have
\[
\sum_{0\le n\le N} r_{\alpha,\beta}(n) e(n x)\ll 
\frac{N(\log N)^2}{q_x+N|q_xx-a_x|}
\qquad
\text{where \quad}
r_{\alpha,\beta}(n) = \sum_{Q(n_1,n_2)=n} e(\alpha n_1+\beta n_2),
\]
uniformly in $\alpha$ and $\beta$.
\end{proposition}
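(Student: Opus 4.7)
The plan is to apply the circle method directly to the underlying two-dimensional elliptical lattice sum. Unfolding $r_{\alpha,\beta}(n)$, the left-hand side equals
\[
 S = \sum_{\substack{(n_1, n_2) \in \Z^2 \\ Q(n_1, n_2) \le N}} e\bigl(Q(n_1, n_2) x + \alpha n_1 + \beta n_2\bigr).
\]
Writing $x = a/q + \eta$ with $a = a_x$, $q = q_x$, the target bound $N(\log N)^2/(q + N|q\eta|)$ suggests a main term of size $N/q$ attenuated by oscillation once $N|\eta|$ exceeds $1$. My first move is to decompose $n_j = qk_j + r_j$ with $0 \le r_j < q$. Since $Q$ is integral, $Q(qk+r) \equiv Q(r) \pmod q$, so the rational part of the phase depends only on the residues; expanding $Q(qk+r) = q^2 Q(k) + q\langle \nabla Q(r), k\rangle + Q(r)$ yields
\[
 S = \sum_{r_1, r_2 \bmod q} e\bigl(aQ(r)/q + \alpha r_1 + \beta r_2 + Q(r)\eta\bigr)\, T(r),
\]
where $T(r)$ is a sum of $e(q^2 Q(k)\eta + \alpha'(r)k_1 + \beta'(r)k_2)$ over those $(k_1, k_2) \in \Z^2$ with $Q(qk+r) \le N$, and $\alpha'(r), \beta'(r)$ are shifted linear coefficients of size $O(q)$.

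Next I would smooth the elliptical indicator by a $C^\infty$ bump $\phi_N$ transitioning to zero in a boundary layer of width $L \ge 1$ and apply two-dimensional Poisson summation to $T(r)$. After the change of variables $z = qy + r$, a direct computation (essentially completing the square against the residue phase) reduces each Fourier mode to
\[
 \sum_r W(r)\, \widehat{F_r}(m) = \frac{J_m}{q^2} \sum_{r_1, r_2 \bmod q} e\bigl((aQ(r) + m \cdot r)/q\bigr),
\]
where $J_m = \int \phi_N(Q(z))\, e\bigl(Q(z)\eta + (\alpha - m_1/q)z_1 + (\beta - m_2/q)z_2\bigr)\,dz$. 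For each $m$ the twisted residue sum has magnitude $O(q)$: completing the square in $r$ (possible since $(a,q) = 1$ on the Farey arc) reduces it to an ordinary two-dimensional Gauss sum. For $m = 0$, polar-type coordinates in $Q$ plus one integration by parts in the radial variable yield $|J_0| \ll N/(1 + N|\eta|)$, and multiplying by the Gauss sum gives exactly the asserted main bound.

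The third step is to show that the non-zero Poisson modes and the smooth-to-sharp replacement are absorbed into the $(\log N)^2$ factor. The integrals $J_m$ for $m \ne 0$ are controlled by repeated integration by parts against the smooth cutoff $\phi_N$: the gradient of the phase is at least $|m|/q$ minus an $O(\eta\sqrt{N}+1)$ contribution from $\eta Az$ and $(\alpha,\beta)$, so $|J_m|$ decays at least like $|m|^{-2}$ beyond the natural scale. A dyadic sum over $|m|$ therefore contributes at most one factor of $\log N$ over the main term. The boundary layer contains $O(L\sqrt{N})$ lattice points, and optimising $L$ against the Poisson tail supplies the second $\log N$.

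The main obstacle is uniformity in $q, \eta, \alpha, \beta, r$. The bound $O(q)$ on $\sum_r e(aQ(r)/q)$ is cleanest when $q$ is coprime to the discriminant of $Q$; common factors force a CRT-based treatment at primes where $Q$ degenerates modulo $q$. Equally delicate is the regime transition of $J_m$ governed by whether the stationary point of the phase lies inside the ellipse (giving a $1/|\eta|$ stationary-phase saving) or outside (giving a $q/|m|$ boundary integration-by-parts saving); this split must be reconciled dyadically with the smoothing scale $L$ so that the total loss is exactly $(\log N)^2$ rather than a higher power.
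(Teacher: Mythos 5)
Your overall architecture --- split $\vec{n}$ into residue classes modulo $q$, pull out a complete two-dimensional Gauss sum of modulus $O(q)$, and control the archimedean factor by stationary and non-stationary phase --- is the same Poisson-summation skeleton that underlies the paper's proof, where it appears in the guise of the theta transformation formula (Lemma~\ref{trans_f}, after Siegel) and the resulting bound $\theta_{\vec{v}}(x+i/N)\ll q_x^{-1}B(x-a_x/q_x)$ of Lemma~\ref{b_theta}. Your treatment of the mode $m=0$ and of the twisted Gauss sums is essentially sound. The genuine gap is in how you remove the sharp cutoff $Q(n_1,n_2)\le N$, and it is fatal on the Farey arcs with $q$ large.

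To quantify it: your unsmoothing error is \emph{additive} --- you pay at least the number of lattice points in the transition annulus, which is $\gg L$ when the layer has width $L$ in the variable $Q(z)$, and there is no cancellation to exploit there without rerunning the whole argument on the annulus. On the other hand the Poisson tail does not converge unless $L$ is large. The true order of decay of the Fourier transform of a (smoothly truncated) indicator of a convex planar domain of diameter $\sqrt{N}$ is $N^{1/4}|\xi|^{-3/2}$ for $N^{-1/2}\ll|\xi|\ll\sqrt{N}/L$, not the $|m|^{-2}$ you posit; since the dual frequencies $v_m$ form a translate of $q^{-1}\Z^2$ and $|G(a,m;q)|$ equals $q$ for \emph{every} $m$ when $(q,2\det A)=1$, the triangle inequality gives at best
\[
\frac{1}{q}\sum_{m\ne 0}|J_m|\ \ll\ \frac{1}{q}\sum_{1\le |m|\ll q\sqrt{N}/L} N^{1/4}\Big(\frac{q}{|m|}\Big)^{3/2}\ \asymp\ q N^{1/2}L^{-1/2},
\]
and this order is actually attained for typical $\alpha,\beta$, so no further saving is available. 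The right-hand side of the Proposition can be as small as $\tfrac12 N^{1/2}(\log N)^2$ (when $q\asymp N^{1/2}$), so the tail forces $L\gg q^2(\log N)^{-4}$ while the unsmoothing error forces $L\ll N^{1/2}(\log N)^2$; these are incompatible once $q\gg N^{1/4}(\log N)^{3}$. This is exactly the ``delicate regime transition'' you flag at the end, but it is not a bookkeeping issue: within the additive-error framework no choice of $L$ closes it. The paper's way out is to smooth with the exponential weight $e^{-2\pi n/N}$, for which the transformed (Poisson-dual) series converges with no boundary-layer loss uniformly in $q\le N^{1/2}$, and to recover the sharp cutoff \emph{multiplicatively}, writing $\sum_{n\le N}r_{\alpha,\beta}(n)e(nx)=\int\theta_{\vec{v}}(y+i/N)D_N^*(x-y)\,dy$ and invoking Lemma~\ref{BcB}; the price is that one must bound $\theta_{\vec{v}}(y+i/N)$ on every Farey arc, not only the one containing $x$, and the sum over the remaining arcs is where the final counting argument and one of the two logarithms come from. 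If you want to keep your direct two-variable setup, you would need to replace the bump $\phi_N$ by an analogous convolution identity (or by one-dimensional unsmoothing in the variable $n=Q(n_1,n_2)$ after the fact), at which point you have essentially reconstructed the paper's argument.
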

In what follows we introduce and estimate some auxiliary functions that will be used in the proof of Proposition~\ref{circular}. To simplify the notation all the bounds will be expressed in terms of the $1$-periodic function
\[
 B(t)=\min(N, \|t\|^{-1})
\]
where in this section $\|\cdot\|$ means the distance to the nearest integer. 

\begin{lemma}\label{b_dir}
For $N\in\Z^+$ 
\begin{equation}\label{dir_b}
\sum_{0\le n\le N}
e(nt) e^{2\pi n/N}
\ll B(t).
\end{equation}
\end{lemma}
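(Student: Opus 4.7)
The sum is a finite geometric progression, so the plan is to evaluate it in closed form and then estimate numerator and denominator separately.

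Set $z=e(t)e^{2\pi/N}$, so the left-hand side equals
\[
 S \;=\; \sum_{n=0}^{N} z^n \;=\; \frac{z^{N+1}-1}{z-1}.
\]
For the numerator I would simply use the triangle inequality together with $|z|=e^{2\pi/N}$, giving
\[
 |z^{N+1}-1|\le |z|^{N+1}+1 = e^{2\pi(N+1)/N}+1 \ll 1.
\]
So all the work is in bounding $|z-1|$ from below.

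For the denominator I would compute
\[
 |z-1|^2
 = \bigl(e^{2\pi/N}\cos(2\pi t)-1\bigr)^2 + e^{4\pi/N}\sin^2(2\pi t)
 = \bigl(e^{2\pi/N}-1\bigr)^2 + 4e^{2\pi/N}\sin^2(\pi t).
\]
Since both summands are nonnegative, this is bounded below by each of them. The first gives $|z-1|\gg 1/N$ (using $e^{2\pi/N}-1\ge 2\pi/N$), while the second gives $|z-1|\gg |\sin(\pi t)|\asymp \|t\|$. Therefore
\[
 |z-1|\gg \max\bigl(N^{-1},\|t\|\bigr),
\]
and combining with the numerator bound yields
\[
 |S|\ll \frac{1}{\max(N^{-1},\|t\|)} = \min\bigl(N,\|t\|^{-1}\bigr) = B(t),
\]
which is the desired estimate. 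There is no real obstacle here; the role of the damping factor $e^{2\pi n/N}$ is precisely to push $z$ off the unit circle by an amount $\asymp 1/N$, so that one automatically recovers the $N$-cap of $B(t)$ even when $\|t\|$ is very small, without having to split into cases.
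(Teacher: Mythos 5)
Your proof is correct and is exactly the argument the paper has in mind: the paper's proof consists of the single sentence ``the left hand side is a geometric series which can be readily bounded,'' and you have supplied the routine details (the identity $|z-1|^2=(e^{2\pi/N}-1)^2+4e^{2\pi/N}\sin^2(\pi t)$ checks out, and the two lower bounds it yields combine to give $|z-1|\gg\max(N^{-1},\|t\|)$ as claimed).
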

\begin{proof}
The left hand side is a geometric series which can be readily bounded.
\end{proof}

Let $A$ be the matrix of the integral quadratic form $Q$ and consider
\[
 \theta_{\vec{v}}(z) 
 = 
 \sum_{n\ge 0} r_{\alpha,\beta}(n) e(nz)
 \qquad\text{with }\quad
 \vec{v}=\frac 12 A^{-1}
 \begin{pmatrix}
  \alpha\\ \beta
 \end{pmatrix}.
\]
This holomorphic function in the upper half plane corresponds to a Jacobi modular form for some special values of $\alpha$ and $\beta$. The reason to parametrize it in terms of $\vec{v}$ is to make the transformation formula, which we state next, as concise as possible. The proof is adapted from \cite[Ch.4]{siegel}, where it is presented in the more general context of indefinite forms.
\begin{lemma}\label{trans_f}
If $z$ and $w$, in the upper half plane, are related by a modular transformation
\[
 w=\frac{az+b}{cz+d}
 \qquad\text{with }\quad
 \gamma:=
 \begin{pmatrix}
  a&b\\ c&d
 \end{pmatrix}
 \in\text{SL}_2(\Z)
 \quad\text{ and }\quad c\ne 0, 
\]
then 
\[
 j_\gamma(z)
 \theta_{\vec{v}}(z) 
 =
 \frac{\delta(\gamma,\vec{v})}
 {2\sqrt{\det{A}}}
 \sum_{\vec{l}\in\mathcal{L}}
 G_{\vec{l}}
 \sum_{\vec{x}\in\Z^2+\vec{l}}
 e\big(
 wQ(\vec{x}+c\vec{v})
 -2a\vec{x}\cdot A\vec{v}
 \big),
\]
where $\delta$ is a certain function with $|\delta|=1$, $j_\gamma(z)= cz+d$, as usual, and $G_{\vec{l}}$ is a normalized Gauss sum associated to each representative of the quotient of lattices $\mathcal{L}=\frac 12A^{-1}\Z^2/\Z^2$, namely
\[
 G_{\vec{l}}
 =
 \frac 1c
 \sum_{\vec{g}\ (\text{\rm mod}\;{c})}
 e\Big(
 -\frac{aQ(\vec{l}+d\vec{g})}{c}
 \Big).
\]
\end{lemma}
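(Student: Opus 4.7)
The plan is to follow the classical Poisson-summation strategy for theta transformation formulas, which in the indefinite setting is carried out in detail in \cite[Ch.4]{siegel}. The positive definite case simplifies the analysis, and I would proceed as follows.

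First, using the definition of $\vec{v}$ to rewrite $\alpha n_1+\beta n_2 = 2\vec{v}^T A\vec{n}$, the theta series becomes
\[
 \theta_{\vec{v}}(z) = \sum_{\vec{n}\in\Z^2} e\bigl(zQ(\vec{n})+2\vec{v}^T A\vec{n}\bigr).
\]
I would split $\vec{n} = \vec{g}+c\vec{m}$ with $\vec{g}\in(\Z/c\Z)^2$ and $\vec{m}\in\Z^2$; expanding $Q(\vec{g}+c\vec{m})$ and completing the square in $\vec{m}$ turns the exponent into $c^2zQ(\vec{m}+\vec{p})-Q(\vec{v})/z$ with $\vec{p}=\vec{g}/c+\vec{v}/(cz)$, so the inner sum over $\vec{m}$ is now a shifted Gaussian lattice sum.

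The second step is Poisson summation applied to the $\vec{m}$-sum. The Fourier transform of a Gaussian weighted by $Q$ gives
\[
 \sum_{\vec{m}\in\Z^2}e\bigl(c^2zQ(\vec{m}+\vec{p})\bigr) = \frac{1}{-2ic^2z\sqrt{\det A}}\sum_{\vec{k}\in\Z^2}e\Bigl(-\frac{Q^*(\vec{k})}{4c^2z}+\vec{p}\cdot\vec{k}\Bigr),
\]
where $Q^*$ is the form with matrix $A^{-1}$ and the square root is fixed on the positive imaginary axis (where the Gaussian integral is real and positive) and then extended by analytic continuation to the upper half plane; this continuation is the source of the unimodular phase that will be absorbed into $\delta(\gamma,\vec{v})$.

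Next I would split the dual variable $\vec{k}=c\vec{k}_1+\vec{k}_0$ according to its class $\vec{k}_0$ modulo $c$. The sum over $\vec{g}\pmod c$ of the resulting $\vec{g}$-dependent phase $e(\vec{g}\cdot\vec{k}_0/c)$ assembles, together with the $-aQ(\vec{l}+d\vec{g})/c$-contribution coming from the quadratic-in-$\vec{g}$ terms, into exactly the Gauss sum $G_{\vec{l}}$ of the statement, with $\vec{l}=\tfrac12 A^{-1}\vec{k}_0\in\mathcal{L}$; the leftover sum over $\vec{k}_1\in\Z^2$ reorganizes into the sum over $\vec{x}\in\Z^2+\vec{l}$. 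The key algebraic step is then the application of the Bezout relation $ad-bc=1$ in the form $-1/(c(cz+d))=w-a/c$: combined with one more completion of the square in $\vec{x}$, it converts the exponent into $wQ(\vec{x}+c\vec{v})-2a\vec{x}\cdot A\vec{v}$ and collects the leftover factor $1/(cz+d)$, which exactly cancels the $j_\gamma(z)$ moved to the left-hand side.

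The main obstacle will be the identification of the multiplier $\delta(\gamma,\vec{v})$: the eighth root of unity from the square-root branch must be tracked consistently through the analytic continuation and the two completions of the square, and its precise dependence on $\gamma$ (equivalent to the classical theta multiplier system) and on $\vec{v}$ needs to be disentangled. This is standard but delicate, and can be taken from the more general treatment of \cite{siegel}; modulo this, the rest of the argument is careful linear algebra on the decomposition of $\Z^2$ and of its dual through the sublattice $c\Z^2$, together with the recognition that the reindexed dual lattice is parametrized exactly once by $\mathcal{L}$.
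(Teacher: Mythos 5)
Your overall strategy (decompose $\Z^2$ modulo $c$, complete the square, apply Poisson summation, invoke B\'ezout) is the same as the paper's, which indeed follows Siegel. But the specific way you complete the square breaks the argument at exactly the point where the arithmetic content is supposed to emerge. You complete the square with respect to $z$, writing the exponent as $c^2zQ(\vec{m}+\vec{p})-Q(\vec{v})/z$ with $\vec{p}=\vec{g}/c+\vec{v}/(cz)$. This absorbs \emph{all} the $\vec{g}$-dependence (in particular the quadratic part) into the shift $\vec{p}$, so after Poisson summation the only trace of $\vec{g}$ is the purely linear character $e(\vec{g}\cdot\vec{k}/c)$. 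Summing that over $\vec{g}\pmod{c}$ merely forces $\vec{k}\equiv\vec{0}\pmod{c}$; no quadratic Gauss sum can appear. The ``$-aQ(\vec{l}+d\vec{g})/c$-contribution coming from the quadratic-in-$\vec{g}$ terms'' that you invoke simply does not exist in your setup. A second, related defect: your dual phase is $-Q^*(\vec{k})/(4c^2z)$, a function of $1/z$; since $1/z=(-cw+a)/(dw-b)$ is a M\"obius, not affine, function of $w$, the B\'ezout identity $-1/(c(cz+d))=w-a/c$ that you cite cannot be applied to it. What your computation actually yields is an inversion-type formula (the $z\mapsto -1/(4\det A\,z)$ transformation for the dual form), not the transformation under a general $\gamma$.

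The paper avoids both problems with one preliminary move: substitute $z=\big(j_\gamma(z)-d\big)/c$ \emph{before} completing the square, and split the phase as $P_1+P_2$ with
\[
 P_1=\frac{j_\gamma(z)}{c}\,Q\Big(c\vec{m}+\vec{g}+\frac{c\vec{v}}{j_\gamma(z)}\Big),
 \qquad
 P_2=-\frac{c}{j_\gamma(z)}Q(\vec{v})-\frac{d}{c}Q(c\vec{m}+\vec{g}).
\]
The term $-\frac{d}{c}Q(c\vec{m}+\vec{g})$ is constant modulo $1$ as $\vec{m}$ varies (so it is pulled out of the Poisson sum) and is congruent to $-\frac{d}{c}Q(\vec{g})$; this is the missing quadratic-in-$\vec{g}$ piece. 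Poisson summation applied to $e(P_1)$ then produces a dual phase in $1/(cj_\gamma(z))$, which \emph{is} affine in $w$, and the $\vec{g}$-sum of $e\big(-\frac{a}{c}Q(\vec{x})-\frac{2}{c}\vec{x}\cdot A\vec{g}-\frac{d}{c}Q(\vec{g})\big)=e\big(-aQ(\vec{x}+d\vec{g})/c\big)$ (using $ad\equiv1\pmod{c}$) assembles $cG_{\vec{l}}$. You should rework the completion of the square along these lines; as written, the proof cannot produce the stated right-hand side.
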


\begin{proof}
By the definition of $\theta_{\vec{v}}(z)$ and separating the classes modulo $c$, 
\[
 \theta_{\vec{v}}(z)
 =
 \sum_{\vec{n}\in\Z^2}
 e\big( zQ(\vec{n})+2\vec{n}\cdot A\vec{v}\big) 
 =
 \sum_{\vec{g}\ (\text{\rm mod}\;{c})}
 \sum_{\vec{m}\in\Z^2}
 e\big( zQ(c\vec{m}+\vec{g})+2(c\vec{m}+\vec{g})\cdot A\vec{v}
 \big).
\]
Writing $\big(j_\gamma(z)-d\big)/c$ instead of $z$ and completing squares, the phase can be expressed as $P_1 +P_2$ with 
\[
 P_1= \frac{j_\gamma(z)}{c}
 Q\Big(
 c\vec{m}+\vec{g}+\frac{c\vec{v}}{j_\gamma(z)}
 \Big)
 \qquad\text{and}\qquad
 P_2= -\frac{c}{j_\gamma(z)}Q(\vec{v})
 -\frac dc Q( c\vec{m}+\vec{g} ). 
\]
Note that $P_2$ does not change modulo 1 when  $\vec{m}$ varies  and we can put $\vec{m}=\vec{0}$. On the other hand, by Proposition~10.1 of \cite{iwaniec}, 
\[
 \sum_{\vec{m}\in\Z^2}
 e\big( 
 P_1
 \big) 
 =
 \frac{i(\det A)^{-1/2}}{2c j_\gamma(z)}
 \sum_{\vec{m}\in\Z^2}
 e\Big(
 -\frac{Q(A^{-1}\vec{m}/2)}{c j_\gamma(z)}
 +
 c^{-1}\big(\vec{g}+\frac{c\vec{v}}{j_\gamma(z)}\big)\cdot\vec{m}
 \Big).
\]
Under the change of variables $\vec{x}= \frac 12 A^{-1}(-\vec{m})$ with $\vec{x}=\vec{n}+\vec{l}$, where $\vec{l}\in\mathcal{L}$ and $\vec{n}\in\Z^2$, this phase corresponds to
\[
 P_3
 =
 -\frac{Q(\vec{x}) +2c\vec{v}\cdot A\vec{x}}{c j_\gamma(z)}
 -\frac{2}{c}\vec{g}\cdot A\vec{x}.
\]
Then, substituting $\big(j_\gamma(z)\big)^{-1}=-cw + a$ in $P_2$ and $P_3$,
\[
 e(P_2+P_3)=
 e\big(
 wQ(\vec{x})+2(cw-a)\vec{v}\cdot A\vec{x}
 +c(cw-a)Q(\vec{v})
 \big)
 e\big(
 -\frac ac Q(\vec{x})
 -\frac 2c \vec{x}\cdot A\vec{g}
 -\frac dc Q(\vec{g})
 \big).
\]
The last exponential is $e\big(-aQ(\vec{x}+d\vec{g})/c\big)$ because $ad\equiv 1\pmod{c}$, and when we sum on $\vec{g}$ we obtain $cG_{\vec{l}}$. It only remains to note that the argument of the first exponential can be written as
$wQ(\vec{x}+c\vec{v})-2a\vec{v}\cdot A\vec{x}-acQ(\vec{v})$.
\end{proof}

\begin{lemma}\label{b_theta}
With the previous notation and $x$ as in \eqref{farey_a},
\[
\theta_{\vec{v}}(x+i/N)
\ll q_x^{-1}B(x-a_x/q_x),
\]
uniformly in $\vec{v}$.
\end{lemma}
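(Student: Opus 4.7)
My strategy is to apply the transformation formula of Lemma~\ref{trans_f} with a modular $\gamma$ chosen so that $z = x + i/N$ is mapped to a point $w$ in the upper half plane with $\Im(w) \gg 1$; there the $\theta$-series on the right hand side admits a trivial bound, and the desired decay comes entirely from the prefactor $j_\gamma(z)^{-1}$.

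Since $\gcd(a_x, q_x) = 1$, one can choose $a, b \in \Z$ such that
\[
 \gamma = \begin{pmatrix} a & b \\ q_x & -a_x \end{pmatrix} \in \text{SL}_2(\Z),
\]
so that $j_\gamma(z) = q_x z - a_x$ and $|j_\gamma(z)|^2 = (q_x x - a_x)^2 + q_x^2/N^2$. The Farey arc bound gives $|x - a_x/q_x| \ll 1/(q_x N^{1/2})$, hence $(q_x x - a_x)^2 \ll 1/N$; combined with $q_x \le N^{1/2}$, this yields $|j_\gamma(z)|^2 \ll 1/N$, and therefore $\Im(w) = \Im(z)/|j_\gamma(z)|^2 \gg 1$.

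With $\Im(w) \gg 1$, each inner sum on the right hand side of Lemma~\ref{trans_f} is bounded in modulus by
\[
 \sum_{\vec{x} \in \Z^2 + \vec{l}} e^{-2\pi \Im(w) Q(\vec{x} + q_x \vec{v})} = O(1),
\]
uniformly in $\vec{v}$ and $\vec{l}$, a standard Gaussian lattice tail estimate (which follows e.g.\ from Poisson summation, giving a $\max(1, \Im(w)^{-1})$ bound that collapses to $O(1)$ in our range) since $Q$ is positive definite and $\Im(w)$ is bounded below. The Gauss sums $G_{\vec{l}}$ are individually $O(1)$ by the classical evaluation of two-dimensional quadratic Gauss sums, and the index set $\mathcal L$ is finite (its size depends only on $\det A$); hence $\theta_{\vec{v}}(z) \ll |j_\gamma(z)|^{-1}$.

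Finally, since $|j_\gamma(z)|^2 \ge \max\bigl((q_x x - a_x)^2,\, q_x^2/N^2\bigr)$ we obtain
\[
 |j_\gamma(z)|^{-1} \le \min\bigl( |q_x x - a_x|^{-1},\, N/q_x \bigr) = q_x^{-1} B(x - a_x/q_x),
\]
which is the required bound. The one delicate point in the argument is the verification that $\Im(w) \gg 1$; this is precisely where the choice of Farey dissection of order $\lfloor N^{1/2}\rfloor$ is essential, as it balances the horizontal deviation $|q_x x - a_x|$ against the height $q_x/N$, forcing both to be $O(N^{-1/2})$ simultaneously.
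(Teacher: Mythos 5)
Your proof is correct and follows essentially the same route as the paper: apply the transformation formula of Lemma~\ref{trans_f} with $\gamma$ sending $a_x/q_x$ to $\infty$, use the Farey mediant property to check $|q_x z-a_x|^2=O(\Im z)$ so that $\Im w\gg 1$, bound the right-hand side term-wise (Gaussian tail plus $G_{\vec l}\ll 1$), and convert $|j_\gamma(z)|^{-1}$ into $q_x^{-1}B(x-a_x/q_x)$. You merely make explicit the verification that the Farey dissection of order $\lfloor N^{1/2}\rfloor$ forces $\Im w\gg1$, which the paper leaves implicit.
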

\begin{proof}
Take in Lemma~\ref{trans_f} a matrix $\gamma$ such that $\gamma(p/q)=\infty$. Since $\Im w=\Im z/|j_\gamma(z)|^2$ with $|j_\gamma(z)|=|qz-p|$ and $G_{\vec{l}}\ll 1$ \cite[Lemma 20.12]{IwKo}, the right hand side of the transformation formula can be estimated term-wise and the exponential decay implies
$\theta_{\vec{v}}(z)\ll |qz-p|^{-1}$ uniformly in $\vec{v}$ when 
$|qz-p|^2=O(\Im z)$.
Choosing now $z=x+i/N$ and $p/q=a_x/q_x$ it is enough to note that
$|t+i/N|^{-1}\ll B(t)$ for $|t|\le 1/2$. 
\end{proof}

\begin{lemma}\label{BcB}
For $t\in\R$ we have 
\[
(B*B)(t)
:=
\int_{-1/2}^{1/2}
B(u)B(t-u)\; du
\ll
N
\frac{\log(2+N\|t\|)}{2+N\|t\|}.
\]
\end{lemma}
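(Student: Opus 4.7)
The plan is to exploit the $1$-periodicity and the even symmetry of $B$ to reduce the problem to $t \in [0,1/2]$ with $\|t\|=t$, and then to split $B$ as the sum of a ``peak'' piece $F(u) = N\mathbf{1}_{|u|<1/N}$ (supported where the bound $N$ is attained) and a ``tail'' piece $G(u) = |u|^{-1}\mathbf{1}_{1/N\le|u|\le1/2}$, both extended by $1$-periodicity. Expanding bilinearly, $B*B \le F*F + 2\,F*G + G*G$, and I would estimate each of the three terms separately, finally comparing to the target bound $N \log(2+Nt)/(2+Nt)$, which behaves like $N$ when $Nt\lesssim 1$ and like $\log(Nt)/t$ when $Nt\gg 1$.

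The convolution $F*F$ is trivial: it is supported in $\{|t|\le 2/N\}$ and bounded pointwise by $N^2\cdot(2/N) = 2N$, so it is subsumed by the target. For the mixed term $F*G$, the peak factor confines $u$ to an interval of length $2/N$, so the $G$-factor can be pulled out by its maximum over that interval. If $t \ge 2/N$ then $|t-u| \ge t/2$, giving $F*G \ll N\cdot(2/N)\cdot(2/t) = 4/t$; while if $t < 2/N$ the tail condition $\|t-u\|\ge 1/N$ keeps $G(t-u)\le N$, so $F*G \ll 2N$. Both match the desired bound.

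The substantive estimate is the tail-tail convolution $G*G$. I would partition the integration domain according to the signs of $u$ and $t-u$, reducing to a few integrals of the form $\int du/(u(t-u))$, $\int du/(u(t+u))$ and $\int du/(u(u-t))$ over explicit subintervals of $[1/N,1/2]$. Partial fractions turn each into a difference of logarithms divided by $t$, and direct evaluation gives $O(\log(2+Nt)/t)$; the dominant piece is the ``interior'' integral $\int_{1/N}^{t-1/N} du/(u(t-u)) = 2t^{-1}\log(Nt-1)$, nonempty only when $Nt\ge 2$. When $Nt < 2$ this interior piece disappears and the remaining tail-tail contribution reduces to $\int_{2/N}^{1/2} du/u^{2} \ll N$, again matching the target.

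The only real obstacle is bookkeeping: one must check every subcase against a single bound that changes shape near $Nt\asymp 1$. The cleanest organization is probably to treat the regimes $Nt\le 1$ and $Nt > 1$ separately from the outset, so that in each regime only one side of the function $x\mapsto \log(2+x)/(2+x)$ is in play, and the elementary estimates above can be read off directly.
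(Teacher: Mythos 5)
Your proof is correct and follows essentially the same route as the paper's: both split the convolution according to where each factor of $B$ equals $N$ versus $\|\cdot\|^{-1}$ and then evaluate the resulting integrals of type $\int du/(u(t\pm u))$ by partial fractions, the interior piece producing the $t^{-1}\log(Nt)$ main term. The paper only streamlines the bookkeeping you anticipate: it dispatches the regime $\|t\|\le 2/N$ in one stroke via Cauchy--Schwarz, $(B*B)(t)\le\int_0^1 B^2\ll N$, and uses the observation $B(t-u)\ge B(t+u)$ for $0<u<1/2$ to fold the negative half of the domain (and with it the wrap-around of $\|t-u\|$ past $1/2$, which your sign-based partition would otherwise have to treat as an extra case) into just four integrals over $[0,1/2]$.
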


\begin{proof}
Cauchy's inequality gives 
$(B*B)(t)\ll \int_0^1 |B|^2\ll N$. Using this and the symmetry, we can assume 
$2N^{-1}<t<1/2$. 
If $0<u<1/2$ it is clear that the distance from $t$ to $u$ is smaller than the distance from $t$ to $-u$. Hence $B(t-u)\ge B(t+u)$ and
$(B*B)(t)\le 2\int_0^{1/2}
B(u)B(t-u)\; du$. This integral is less or equal than
\[
\int_0^{N^{-1}}
\frac{N\; du}{t-u}
+
\int_{N^{-1}}^{t-N^{-1}}
\frac{du}{u(t-u)}
+
\int_{t-N^{-1}}^{t+N^{-1}}
\frac{N\; du}{u}
+
\int_{t+N^{-1}}^{1/2+N^{-1}}
\frac{du}{u(u-t)},
\]
that gives $O\big(t^{-1}\log(Nt)\big)$ evaluating or estimating the integrals.
\end{proof}

\begin{proof}[Proof of Proposition~\ref{circular}]
Assume for convenience $-1/2 \leq x < 1/2$ and let $D_N^*(t)$ be the left hand side in \eqref{dir_b}. We have
\[
\sum_{0\le n\le N} r_{\alpha,\beta}(n) e(n x)
= \int_{-1/2}^{1/2}
\sum_{n \ge 0} r_{\alpha,\beta}(n) e\big(n (y+i/N)\big)
D_N^*(x-y)\; dy.
\]
By Lemma~\ref{b_dir} and Lemma~\ref{b_theta}
\begin{equation}\label{cirm}
\sum_{0\le n\le N} r_{\alpha,\beta}(n) e(n x)
\ll 
\sum_{a/q}
q^{-1}
\int_{\mathcal{A}_{a/q}}
B(y-a/q) B(x-y)\; dy
\end{equation}
where $\bigcup \mathcal{A}_{a/q}$ is the Farey dissection of $[-1/2, 1/2]$ of order 
$\lfloor N^{1/2}\rfloor$ as before. 
Trivially
\[
 \mathcal{I}_{a/q}
 :=
\int_{\mathcal{A}_{a/q}}
B(y-a/q) B(x-y)\; dy
\le (B*B)(x-a/q).
\]

If $a/q=a_x/q_x$ we employ Lemma~\ref{BcB} (with an extra logarithm to absorb an error term appearing later) to get 
\[
 \mathcal{I}_{a_x/q_x}\ll
 \frac{N(\log N)^2}{1+N|x-a_x/q_x|}.
\]
In the rest of the cases $|x-a/q|\gg N^{-1}$ is assured and 
$\mathcal{I}_{a/q}\ll |x-a/q|^{-1}\log N$ by
Lemma~\ref{BcB}.
Substituting in \eqref{cirm}
\begin{equation}\label{sum_x}
\sum_{0\le n\le N} r_{\alpha,\beta}(n) e(n x)
\ll 
 \frac{N(\log N)^2}{q_x+N|q_xx-a_x|}
+
(\log N)\sum_{a/q\ne a_x/q_x} |qx-a|^{-1}.
\end{equation}
Each summand attains its maximum when $x$ is one of the end-points of $\mathcal{A}_{a_x/q_x}$, both of which are rational numbers $A/Q$ with $Q \asymp N^{1/2}$. Hence doubling the sum, it suffices to bound 
\[
\sum_{a/q\ne a_x/q_x} 
|qA/Q-a|^{-1}
=
Q
\sum_{m\le N} 
m^{-1}
\#\big\{
a/q\; :\; Aq-Qa=\pm m
\big\}.
\]
The last cardinality is $O(1)$ and introducing this bound in \eqref{sum_x}, the result follows. 
\end{proof}

\section{Paraboloids}

We are ready to prove the first statement of Theorem~\ref{main} in the following stronger form:

\begin{theorem}\label{main2}
Let $\mathcal{P}$ be as in \eqref{main_P} with $d \geq 3$. Assume that the coefficient matrix $A = (a_{ij})$ of $Q$ satisfies $a_{12}/a_{11}, a_{22}/a_{11} \in \mathbb{Q}$. Then for each fixed $\epsilon>0$,
\[
 \mathcal{N}(R)
 =|\mathcal{P}|R^d
 +
 O\big(
 R^{d-2+\epsilon}
 \big)
\]
holds uniformly for $0<c\ll 1<R$ and $\vec{\beta}\in\R^{d-1}$.
\end{theorem}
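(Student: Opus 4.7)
The plan is to separate the axis variable. Writing $\vec{n}=(\vec{n}',m)\in\Z^{d-1}\times\Z$ and $g(\vec{n}')=R\bigl(c-Q(\vec{n}'/R+\vec{\beta})\bigr)$, the identity $\lfloor x\rfloor = x - 1/2 - \psi(x)$ applied in the $m$-direction yields
\[
\mathcal{N}(R)-|\mathcal{P}|R^d \;=\; \Bigl(2\sum_{\vec{n}'}g(\vec{n}')_+\;-\;|\mathcal{P}|R^d\Bigr)\;-\;2\sum_{\vec{n}':\,g(\vec{n}')\ge 0}\psi\bigl(g(\vec{n}')\bigr),
\]
where $g(\vec{n}')_+ = \max(g(\vec{n}'),0)$. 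The parenthesized main piece is a smoothly-weighted ellipsoidal lattice discrepancy; since the weight $(c-Q(\vec{y}+\vec{\beta}))_+$ is Lipschitz and vanishes on $\partial\{Q(\vec{y}+\vec{\beta})\le c\}$, one integration by parts on the Fourier side of Poisson summation gains a power of $|\vec{\eta}|^{-1}$ and reduces the estimate to standard ellipsoidal Fourier bounds, producing an error well within $O(R^{d-2+\epsilon})$. The real work is in the $\psi$-sum.

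For that, invoke Vaaler's truncated expansion of $\psi$ at height $H=R$: the tails contribute $O(R^{d-1}/H)=O(R^{d-2})$, and it remains to bound
\[
\sum_{h=1}^{H}\frac{1}{h}\biggl|\sum_{\vec{n}':\,g(\vec{n}')\ge 0}e\bigl(h g(\vec{n}')\bigr)\biggr|.
\]
Split variables as $\vec{n}'=(n_1,n_2,\vec{n}'')$ and use $a_{12}/a_{11},a_{22}/a_{11}\in\Q$ to decompose
\[
Q(\vec{n}') \;=\; \frac{a_{11}}{D}\,Q^*(n_1,n_2)\;+\;2\,L(\vec{n}'')^{\top}\!\begin{pmatrix}n_1\\ n_2\end{pmatrix}\;+\;Q''(\vec{n}''),
\]
with $Q^*$ a fixed positive definite integral binary quadratic form, $D\in\Z_{>0}$ a common denominator, and $L,Q''$ linear and quadratic in $\vec{n}''$ respectively. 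The phase $h g(\vec{n}')$ then becomes $-\tfrac{h a_{11}}{RD}\,Q^*(n_1,n_2)$ plus a linear piece in $(n_1,n_2)$ with coefficients $\alpha(h,\vec{n}'',\vec{\beta},R),\beta(h,\vec{n}'',\vec{\beta},R)$, plus a term independent of $(n_1,n_2)$. The constraint $g(\vec{n}')\ge 0$ cuts out a translated ellipse $Q^*((n_1,n_2)+\vec{s})\le M$ with $M\asymp R$; translating by the integer part of $\vec{s}$ leaves a residual fractional shift that modifies the sum by at most $O(R^{1/2})$ boundary lattice points, a harmless correction. The remaining inner sum fits Proposition~\ref{circular} verbatim with $N\asymp R$ and $x=-h a_{11}/(RD)$, the fractional shift being absorbed into the linear parameters since the proposition is uniform in them. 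Using only the trivial lower bound $q_h+N|q_h x-a_h|\ge 1$ in the conclusion of the proposition, the inner sum is $O(R\log^2 R)$ uniformly in $\vec{n}''$.

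To close, there are $O(R^{(d-3)/2})$ admissible $\vec{n}''$ (from $|\vec{n}''|\lesssim R^{1/2}$) and $\sum_{h\le H}1/h\ll\log R$, so the $\psi$-sum is bounded by $R^{(d-3)/2}\cdot R\log^3 R=O(R^{(d-1)/2+\epsilon})$, with a further $O(R^{(d-2)/2+\epsilon})$ from the boundary correction; both are $\le R^{d-2+\epsilon}$ since $(d-1)/2\le d-2$ for $d\ge 3$. The main obstacle is really Proposition~\ref{circular} itself, already in hand; the remaining technical calibrations---the weighted ellipsoidal estimate for the main piece and the reduction of the shifted-ellipse constraint to the centered form required by the proposition---are routine given the sharpness of that exponential sum bound.
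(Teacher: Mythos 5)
Your overall architecture matches the paper's: peel off the axis variable, approximate $\psi$ by trigonometric polynomials, and reduce to a two-variable quadratic exponential sum over an ellipse controlled by Proposition~\ref{circular}. (The paper organizes the $d>3$ case by slicing into three-dimensional paraboloids and invoking the uniformity in $c,\vec{\beta}$, which is essentially your fixing of $\vec{n}''$.) However, there is a fatal gap exactly at the step you dismiss as already done. When you apply Proposition~\ref{circular} to the inner sum over $(n_1,n_2)$, the variable $n=Q^*(n_1,n_2)$ ranges up to $N\asymp R^2$, not $N\asymp R$: the lattice points satisfy $|n_1|,|n_2|\lesssim R$, so the ellipse is $Q^*\le M$ with $M\asymp R^2$ (your ``$M\asymp R$'' is a scale error). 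Consequently the trivial lower bound $q_h+N|q_hx-a_h|\ge 1$ gives only $O(R^2(\log R)^2)$, i.e.\ the trivial estimate for a sum with $\asymp R^2$ terms, and your claimed $O(R\log^2R)$ does not follow. Worse, no bound of size $O(R^{1+\epsilon})$ \emph{uniform in $h$} can hold: if $R$ is an integer and $h$ is chosen so that $ha_{11}/(RD)\in\Z$, the quadratic phase is trivial and the inner sum has size $\asymp R^2$. The saving must come from averaging over $h$ with the weight $1/h$ together with a Diophantine analysis of the Farey data of $x=ha_{11}/(RD)$ --- this is precisely the content of the paper's argument after \eqref{e_compl}: the dyadic decomposition in $m$, the observation $|mq-aR|\le R/M$, the divisor bound showing each $a$ admits only $O(R^{1+\epsilon}/M)$ values of $m$, and the relation $q\asymp Ra/K$. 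None of this appears in your proposal, and it is the actual core of the proof.

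Two further scale errors, less serious but symptomatic: the number of admissible $\vec{n}''$ is $O(R^{d-3})$ (from $|\vec{n}''|\lesssim R$), not $O(R^{(d-3)/2})$; and shifting the ellipse by a fractional vector changes the sum by the number of lattice points within $O(1)$ of its boundary, which is $O(R)$, not $O(R^{1/2})$. With the corrected counts and a correct (averaged) treatment of the inner sum these do not destroy the final exponent, but as written your bookkeeping does not establish $O(R^{d-2+\epsilon})$.
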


The proof is divided in two steps: first we deal with the three-dimensional case where we can exploit the full rationality of $Q$, and then we extend the result to higher dimensions. The uniformity in $c$ and $\vec{\beta}$ is crucial for the second step to succeed, as these parameters have to be taken depending on $R$.

\begin{proof}[Proof of Theorem~\ref{main2}, case $d = 3$]
Rescaling $R$ and $c$ we may suppose that $Q$ is integral. We may also assume that the vector $(\alpha_1, \alpha_2) = R\vec{\beta}$ lies in $[0,1) \times [0, 1)$, since $\mathcal{N}(R)$ is $1$-periodic in these variables. Finally we assume $c>4R^{-2}$ because $\mathcal{N}(R)-|\mathcal{P}|R^3=O(R)$ when $c\ll R^{-1}$. 

We have
\begin{equation}\label{ma_er}
 \frac 12 \mathcal{N}(R)
 =
 \mathop{\sum\sideset{}{'}\sum}_{n_1,n_2}
 \Big(
 \lfloor
 f(n_1,n_2)
 \rfloor
 +\frac 12
 \Big)
 =
  \mathop{\sum\sideset{}{'}\sum}_{n_1,n_2}
 f(n_1,n_2)
 -
 \mathop{\sum\sideset{}{'}\sum}_{n_1,n_2}
 \psi\big(f(n_1,n_2)\big),
\end{equation}
where $f(x,y)= \big(cR^2-Q(x+\alpha_1,y+\alpha_2)\big)/R$, $\psi(x)=x-\lfloor x\rfloor -1/2$ and the prime indicates that the double summation is restricted to $Q(n_1+\alpha_1,n_2+\alpha_2)\le cR^2$. 

Let $\chi$ the characteristic function of $Q(x+\alpha_1,y+\alpha_2)\le cR^2$. Applying Euler-Maclaurin formula firstly in  $n_2$ and secondly in $n_1$, we have
\begin{eqnarray*}
  \mathop{\sum\sideset{}{'}\sum}_{n_1,n_2}
 f(n_1,n_2)
 &=&
 \sum_{|n_1|\ll R\sqrt{c}}
 \Big(
 \int \chi(n_1,y)f(n_1,y)\; dy 
 +
 O(1)
 \Big)
\\
 &=&
 \int \chi(x,y)f(x,y)\; dy dx 
 +
 O(R)
\end{eqnarray*}
and the last integral is, of course, $\frac 12 |\mathcal{P}|R^3$.  

It is well known (see for instance \cite{montgomery}) that for any $M\in\Z^+$ there exist trigonometric polynomials 
$Q^\pm(x)=\sum_{|m|\le M} a_m^\pm e(mx)$
such that $Q^-(x)\le \psi(x)\le Q^+(x)$
with $a_0^\pm \ll M^{-1}$ and $a_m^\pm \ll m^{-1}$.
Taking $M = \lfloor c^{1/2} R\rfloor$ we get from \eqref{ma_er} 
\begin{equation}\label{mel}
 \mathcal{N}(R)
 =
 |\mathcal{P}|R^3
 +
 O\big(\mathcal{E}(R)\big)+O\big(R^{1+\epsilon}\big)
\end{equation}
with
\[
 \mathcal{E}(R)
 =
 \sum_{m\le M}
 \frac 1m
 \Big|
 \mathop{\sum\sideset{}{''}\sum}_{n_1,n_2}
 e\Big(
 \frac{m}{R}Q(n_1+\alpha_1,n_2+\alpha_2)
 \Big)
 \Big|.
\]
The double prime indicates we have replaced the summation domain to $Q(n_1, n_2)\le M^2$, at the cost of adding and removing at most $O(M)$ terms.
Subdividing into dyadic intervals, there exists $K\le M$ such that
\[
 \mathcal{E}(R)
 \ll K^{-1} M^\epsilon
 \sum_{K\le m< 2K}
 \Big|
 \mathop{\sum\sideset{}{''}\sum}_{n_1,n_2}
 e\Big(
 \frac{m}{R}Q(n_1+\alpha_1,n_2+\alpha_2)
 \Big)
 \Big|.
\]
By Proposition~\ref{circular}, 
\begin{equation}\label{e_compl}
 \mathcal{E}(R)
 \ll K^{-1}
 M^{2 + \epsilon}
 \sum_{K\le m< 2K}
 \big(q+M^2|qm/R-a|\big)^{-1}
\end{equation}
where $a=a(m)$ and  $q=q(m)$ are determined by \eqref{farey_a} with $x=m/R$. In particular, we have
\[
 |mq-aR|\le \frac{R}{M}.
\]

If $K>R/M$ then $0\ne a \ll R$ and for each fixed $a$ we have that $m$ divides an integer in an interval of length $O(R/M)$. This leaves $O\big(R^{1+\epsilon}/M\big)$ possibilities for $m$. Neglecting the term $M^2|qm/R-a|$ in \eqref{e_compl} and using $q\asymp Ra/K$,
\[
 \mathcal{E}(R)
 \ll K^{-1}
 M^{2+\epsilon}
 \sum_{a \ll R}
 R^{1+\epsilon} M^{-1}
 (Ra/K)^{-1}
 \ll R^{1+\epsilon}.
\]

If $K \leq R/M$ the argument of divisibility fails when $a/q=0/1$. These terms can be estimated directly in \eqref{e_compl}, while the previous argument can be applied to those with $a \neq 0$, yielding again $\mathcal{E}(R) \ll R^{1+\epsilon}$.
\end{proof}

\begin{proof}[Proof of Theorem~\ref{main2}, case $d > 3$]
Write $\vec{x} = (\vec{x}_1, \vec{x}_2)$ and $\vec{\beta} = (\vec{\beta}_1, \vec{\beta}_2)$ 
with $\vec{x}_1,\vec{\beta}_1\in\R^2$
and
$\vec{x}_2,\vec{\beta}_2\in\R^{d-3}$.
Completing squares,
\begin{equation}\label{decompQ}
 Q(\vec{x} + \vec{\beta}) 
 = 
 Q_1(
 \vec{x}_1 + \vec{\gamma}
 ) 
 + 
 Q_2(\vec{x}_2 + \vec{\beta}_2),
\end{equation}
where $\vec{\gamma}$ depends linearly on $(\vec{x}_2,\vec{\beta}_1,\vec{\beta}_2)$ and the matrix of $Q_1$ is $(a_{ij})_{i,j=1}^2$.

Given $\vec{n}_2\in\Z^{d-3}$, let us denote by $\mathcal{P}_{\vec{n}_2}$ the 
three-dimensional slice of $\mathcal{P}$ obtained by fixing $\vec{x}_2=\vec{n}_2/R$, and by $\mathcal{N}_{\vec{n}_2}(R)$ the number of lattice points it contains after being dilated with scale factor $R$. By the three-dimensional case of this theorem and the decomposition \eqref{decompQ},
\[
 \mathcal{N}(R) 
 =
 \sum_{\vec{n}_2} 
 \mathcal{N}_{\vec{n}_2}(R)
 =
 \sum_{\vec{n}_2} 
 |\mathcal{P}_{\vec{n}_2}|R^3
 + O\big( R^{d - 2 + \epsilon}\big),
\]
both sums extended to the domain $Q_2(\vec{n}_2 + R\vec{\beta}_2) \leq cR^2$. A simple computation shows
\[
 |\mathcal{P}_{\vec{n}_2}| 
 = 
 \frac{ \pi}{ \sqrt{D}}
 \big(
 c - Q_2(\vec{n}_2/R + \vec{\beta}_2)
 \big)^2
\]
where $D$ is the determinant  of (the Hessian matrix of) $Q_1$. Applying the Euler-Maclaurin formula iteratively in one variable at a time we find 
\[
 \frac{ \pi}{ \sqrt{D}} 
 \sum_{\vec{n}_2}
 \big(
 c - Q_2(\vec{n}_2/R + \vec{\beta}_2)
 \big)^2 
 =
 \frac{ \pi}{ \sqrt{D}} 
 \int
 \big(
 c - Q_2(\vec{x}_2/R)
 \big)^2
 \; d\vec{x}_2
 +
 O\big(R^{d - 5}\big)
\]
and the main term in the right hand side is $|\mathcal{P}|R^{d-3}$.
\end{proof}

\section{Some $\Omega$-results}

Let us start considering first the two-dimensional parabolic region $\mathcal{P}_2$ introduced in \S2. The simple estimate $\mathcal{N}_2(R) - |\mathcal{P}_2|R^2 = \Omega\big(R^{1/2}\big)$, already contained in \cite{popov}, follows by noting that for some values of $R$ there are at least $\Omega\big(R^{1/2}\big)$ points lying on the boundary of $R\mathcal{P}_2$.\footnote{For clarity we denote throughout this section by $R\mathcal{K}$ the image of the set $\mathcal{K}$ under the homothety with respect to the origin and scale factor $R$.} Indeed, given any positive integer $M$, let $R = M^2$ and consider the points $\big(kM,\pm(M^2-k^2)\big)$ with $-M\le k\le M$. The following more precise one-sided $\Omega$-results are a consequence of Corollaries~\ref{prime4k1} and~\ref{classn}.

\begin{proposition}
The error term $\mathcal{E}(R) = \mathcal{N}_2(R)-|\mathcal{P}_2|R^2$ satisfies
\[
 \mathcal{E}(R)=\Omega_+\big(R^{1/2}\big)
 \quad
 \text{and}
 \quad
 \mathcal{E}(R)=\Omega_-\big(R^{1/2}\log\log R\big).
\]
\end{proposition}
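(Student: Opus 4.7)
The strategy is to evaluate the exact formulas of Corollaries~\ref{prime4k1} and~\ref{classn} at appropriately chosen sequences of odd integers $N$. The point is that neither formula involves cancellation: in Corollary~\ref{prime4k1} the error reduces to a single positive contribution $\tfrac{1}{3}+2\sqrt{N^*}$, while in Corollary~\ref{classn} the error equals $\tfrac{7}{3}$ minus a sum of positive terms $\omega_d h(-d)$. So by choosing $N$ along the right arithmetic progression we can drive $\mathcal{E}(N)$ as far in one direction as the arithmetic input allows.

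For the $\Omega_+$ direction I would take $N = p^2$ with $p\equiv 1\pmod 4$ a prime. Then $N$ is odd, every prime factor of $N$ is $\equiv 1\pmod 4$, and the largest square divisor is $N^*=p^2=N$. Corollary~\ref{prime4k1} immediately yields
\[
\mathcal{E}(p^2) \,=\, \tfrac{1}{3} + 2\sqrt{N^*} \,=\, \tfrac{1}{3} + 2p,
\]
which is $\gg R^{1/2}$ along the subsequence $R=p^2$, and hence $\mathcal{E}(R)=\Omega_+(R^{1/2})$ follows from the infinitude of primes $p\equiv 1\pmod 4$.

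For the $\Omega_-$ direction I would take $N=p$ prime with $p\equiv 3\pmod 4$ and $p\neq 3$. Then $N$ is odd and squarefree, the only divisor $d\mid N$ with $d\equiv 3\pmod 4$ is $d=p$, and Corollary~\ref{classn} collapses to $\mathcal{E}(p)=\tfrac{7}{3}-4h(-p)$. By Dirichlet's class number formula $h(-p)=\pi^{-1}\sqrt{p}\,L(1,\chi_{-p})$, so the target $\Omega_-\bigl(R^{1/2}\log\log R\bigr)$ reduces to the existence of infinitely many primes $p\equiv 3\pmod 4$ with $L(1,\chi_{-p})\gg \log\log p$. This is a classical unconditional $\Omega$-result of Chowla, building on work of Littlewood. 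Invoking it one gets $\mathcal{E}(p)\ll -\sqrt{p}\,\log\log p$ along the corresponding subsequence, which is exactly the claim.

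The only non-trivial ingredient is the $\Omega$-result for $L(1,\chi_{-p})$; it is this $L$-value input that introduces the $\log\log R$ factor and is therefore responsible for the asymmetry with the $\Omega_+$ bound, whose proof needs no analytic information at all. No further obstacle is anticipated, since the rest of the argument is a direct substitution into the two corollaries and a bookkeeping check of the sign and size of the explicit terms; a more refined choice of $N$ together with stronger $\Omega$-bounds for $L(1,\chi)$ is presumably what underlies the sharper Proposition~\ref{omega2} announced in the abstract.
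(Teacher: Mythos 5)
Your proof is correct and follows essentially the same route as the paper: the $\Omega_+$ bound comes from taking $N$ an odd square with prime factors $\equiv 1\pmod 4$ in Corollary~\ref{prime4k1}, and the $\Omega_-$ bound from taking $N=p$ prime, $p\equiv 3\pmod 4$, in Corollary~\ref{classn} together with the unconditional lower bound $h(-p)/\sqrt p\gg\log\log p$ for infinitely many such primes. The only difference is bibliographic: the paper cites Bateman--Chowla--Erd\H{o}s \cite{BaChEr} for that last input, which is the precise reference you want for the restriction to prime discriminants.
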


\begin{proof}
The first statement follows by taking $N$ a square in Corollary~\ref{prime4k1}. For the second one we remark that the main result of \cite{BaChEr} asserts that there are infinitely many primes $p\equiv 3\pmod{4}$ satisfying $h(-p)/\sqrt{p}\gg \log\log p$. It suffices to take $N = p$ for any such prime $p$ in Corollary~\ref{classn}.
\end{proof}

The upper bound $h(-d)/\sqrt{d} \ll \log\log{d}$ is known to hold under the generalized Riemann hypothesis \cite{littlewood}. Any hope to obtain a better $\Omega_-$-result from Corollary~\ref{classn} therefore must take advantage of the sum of class numbers, and for this we need uniform lower bounds over certain families of discriminants. Fortunately Heath-Brown proved an astonishing result that, in some way, shows the absence of exceptional zeros for large multiples of some primes in a fixed set \cite{HeBr2}. Even more astonishing is the short and elementary proof of this fact. In its original form (see \cite{blomer} for an enhaced version with the same proof) the result claims that if $S$ is a fixed set of more than $505^2$ odd primes then for any sufficiently large integer $d$ there exists a prime $p_d \in S$ satisfying $L(1,\chi_{-p_dd}) \gg (\log d)^{-1/9}$. Using this we prove the following one-sided $\Omega$-result:

\begin{proposition}\label{omega2}
We have
\[
 \mathcal{N}_2(R)-|\mathcal{P}_2|R^2=
 \Omega_-\big(R^{1/2}\exp(c\sqrt{\log R}/\log\log R)\big)
 \qquad
 \text{for any }\quad c<\sqrt{2}.
\]
\end{proposition}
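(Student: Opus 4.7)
The plan is to combine the exact formula of Theorem~\ref{int_para} with the Heath-Brown--Blomer bound $L(1,\chi_{-pd})\gg(\log d)^{-1/9}$ cited in the text, applied along a carefully chosen sequence of squarefree $N$'s. For such $N$ one has $N^*=1$, and the desired $\Omega_-$-statement reduces to exhibiting, for every fixed $c<\sqrt 2$, an infinite sequence of $N$'s along which
\[
\sum_{\substack{d\mid N\\ d\equiv 3\ (4)}}\sqrt d\,L(1,\chi_{-d})\;\gg\;\sqrt N\exp\!\Big(c\tfrac{\sqrt{\log N}}{\log\log N}\Big).
\]
The super-polynomial amplification beyond $\sqrt N$ will come from having many divisors $d$ of $N$ in the class $3\pmod 4$ with $L(1,\chi_{-d})\gg(\log N)^{-1/9}$.

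Fix once and for all a set $S$ of primes $\equiv 3\pmod 4$ with $|S|>505^2$, so that Heath-Brown--Blomer applies to $S$. For $k$ large let $M=p_1\cdots p_k$ be the product of the first $k$ primes $\equiv 3\pmod 4$ that do \emph{not} lie in $S$, and set $N=MQ$ with $Q=\prod_{q\in S}q$ a fixed constant. For each even-cardinality subset $T\subseteq\{p_1,\dots,p_k\}$ put $d_T=\prod_{p\in T}p\equiv 1\pmod 4$; once $d_T$ is sufficiently large the Heath-Brown--Blomer theorem produces $q_T\in S$ with $L(1,\chi_{-q_Td_T})\gg(\log d_T)^{-1/9}$. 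Because $S$ is disjoint from the primes of $M$, the number $d'_T:=q_Td_T$ is squarefree, divides $N$, and lies in the class $3\pmod 4$; the assignment $T\mapsto d'_T$ is injective, as $T$ is recovered by stripping off the unique prime factor of $d'_T$ belonging to $S$. Discarding the remaining terms together with the (polynomial-in-$k$ and hence negligible) small-$T$ tail,
\[
\sum_{\substack{d\mid N\\ d\equiv 3\ (4)}}\sqrt d\,L(1,\chi_{-d})\;\gg\;(\log N)^{-1/9}\sum_{\substack{T\subseteq\{p_1,\dots,p_k\}\\ |T|\text{ even}}}\sqrt{d_T}.
\]

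The generating identity $\sum_{|T|\text{ even}}\sqrt{d_T}=\tfrac12\big[\prod_i(1+\sqrt{p_i})+\prod_i(1-\sqrt{p_i})\big]$ together with the factorisation $1\pm\sqrt{p_i}=\pm\sqrt{p_i}(1\pm p_i^{-1/2})$, the Taylor expansion of $\log(1\pm x)$, and Mertens' theorem for the progression $3\pmod 4$ gives
$\sum_{|T|\text{ even}}\sqrt{d_T}\gg\sqrt M\exp(S_k)(\log\log N)^{-1/4}$,
where $S_k=\sum_{i\le k}p_i^{-1/2}$. By the Prime Number Theorem for arithmetic progressions the $i$-th prime $\equiv 3\pmod 4$ satisfies $p_i\sim 2i\log i$, and a direct integral estimate yields $S_k\sim\sqrt 2\,\sqrt{k/\log k}$; since $\log M\sim k\log k$ and $k\sim\log N/\log\log N$ (as $Q$ is bounded), this is $\sim\sqrt 2\,\sqrt{\log N}/\log\log N$. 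Combining with the Heath-Brown factor,
\[
\sum_{\substack{d\mid N\\ d\equiv 3\ (4)}}\sqrt d\,L(1,\chi_{-d})\;\gg\;\sqrt N\exp\!\Big(\sqrt 2\,\tfrac{\sqrt{\log N}}{\log\log N}-\tfrac{\log\log N}{9}-\tfrac{\log\log\log N}{4}+O(1)\Big),
\]
whose exponent dominates $c\sqrt{\log N}/\log\log N$ for any fixed $c<\sqrt 2$ once $N$ is large enough. Feeding this into Theorem~\ref{int_para} (where the harmless term $2\sqrt{N^*}=2$ is swallowed by the main estimate) produces the claimed $\Omega_-$-bound along our sequence of $N$'s.

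The main delicate step is forcing the Heath-Brown prime $q_T$ to lie outside $T$, so that $d'_T$ is squarefree and the $d'_T$ are pairwise distinct as $T$ varies; arranging $S$ to be disjoint from the primes of $M$ handles this cleanly and is the sole motivation for introducing the auxiliary factor $Q$. Tracking the exact constant $\sqrt 2$ in the exponent (in place of the $2$ one would get using all primes) is a direct manifestation of the factor $\sqrt 2$ in $p_i\sim 2i\log i$, which reflects that only half of all primes lie in the class $3\pmod 4$.
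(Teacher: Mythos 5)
Your proposal is correct and follows essentially the same route as the paper: both rest on Theorem~\ref{int_para} (equivalently Corollary~\ref{classn}), the Heath-Brown--Blomer lower bound for $L(1,\chi_{-p_dd})$ with $p_d$ drawn from a fixed set $S$ of more than $505^2$ primes, a divisor-sum amplification over a squarefree $N$ built from $S$ times many small primes in a single residue class mod $4$, and the same Mertens/PNT computation producing the constant $\sqrt{2}$ from the density $1/2$ of that class. The only (cosmetic) difference is that the paper takes the auxiliary primes $\equiv 1\pmod 4$, so that \emph{every} divisor $d\mid N'$ can be fed to Heath-Brown's theorem, whereas you take them $\equiv 3\pmod 4$ and must restrict to even-cardinality subsets --- a detour your generating-function identity handles correctly.
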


\begin{proof}
Let $S$ be the set of the first $505^2+1$ primes $p\equiv 3\pmod{4}$ and fix an integer $d_0$ large enough so that the aforementioned result of Heath-Brown holds for any $d \geq d_0$. Choose $N=N'\prod_{p\in S}p$ in Corollary~\ref{classn}, where $N'$ is the product of the primes $p\equiv 1\pmod{4}$ in the interval $[d_0,x]$ for any large $x$. Then by the class number formula,
\[
  \sum_{\substack{d\mid N\\ d\equiv3\ (4)}}
 \omega_d h(-d)
 \gg 
  \sum_{d\mid N'}
 \frac{\sqrt{p_dd}}{(\log d)^{1/9}}
 \gg
 \frac{\sqrt{N'}}{(\log N)^{1/9}}
 \prod_{p\mid N'}
 \big(1+p^{-1/2}\big).
\]
The result now follows by noting that the logarithm of the product over the primes is asymptotically $\sqrt{2\log N'}/\log\log N'$ and $N'\gg N$. 
\end{proof}

\

We now prove some $\Omega$-results  for higher dimensional centered rational paraboloids i.e., those of the form
\begin{equation}\label{ra_pa}
\mathcal{P} = \big\{
(\vec{x},y)\in\R^{d-1}\times \R
\;:\;
|y|\le c-Q(\vec{x})
\big\}
\qquad 
\text{with $c\in\Q$ and $Q$ rational}.
\end{equation}

\begin{theorem}\label{om_re}
The lattice point discrepancy
$\mathcal{N}(R)-|\mathcal{P}|R^d$ for $\mathcal{P}$ as in \eqref{ra_pa} 
is $\Omega\big(R^{d-2}\eta(R)\big)$, where
\[
  \eta(R) =
  \begin{cases}
  \exp\big(K\frac{\log R}{\log\log R}\big) & \text{for any } K < \log{2} \text{ when } d = 3, \\
  \log\log{R} & \text{when } d = 4, \\
  \sqrt{\log\log{R}} & \text{when } d = 5, \\
  1 & \text{when } d \geq 6.
  \end{cases}
\]
\end{theorem}

This proves that our main result is sharp in the sense that the $\epsilon$ in the exponent cannot be dropped in the low dimensional cases. Note that when $d \geq 6$ and $\mathcal{P}$ is as in \eqref{ra_pa} the lattice point discrepancy is actually $O\big(R^{d-2}\big)$, as shown by applying Euler-Maclaurin summation to the corresponding asymptotics for the number of lattice points in the dilated $(d-1)$-dimensional ellipsoid $\{Q(\vec{x}) \leq 1\}$ (see, for instance, \cite[\S21, Satz~1]{fricker}). For general paraboloids, however, our method does not provide an answer as to whether the $\epsilon$ is really necessary for $d \geq 6$.

\

To deduce Theorem~\ref{om_re} we consider $B(R)$ with $R\in \Z^+$, the number of lattice points on the boundary of $\mathcal{P}$ dilated by $R$. Clearly, an $\Omega$-result for $B(R)$ readily implies the same $\Omega$-result for the lattice point discrepancy. We will therefore prove Theorem~\ref{om_re} in the stronger form $B(R) = \Omega\big(R^{d-2}\eta(R)\big)$. Some reductions first: note that without loss of generality we may assume $c \in \mathbb{Z}$, and let $Q = \frac ab Q^*$ where $Q^*$ is a primitive integral quadratic form. For each $\vec{n}\in\Z^{d-2}$ with $Q^*(\vec{n})=Rn$ and $abn\le cR$ we have that the lattice point $(b\vec{n}, cR-abn)$ is counted by $B(R)$. In other words, 
\begin{equation}\label{bdrq}
B(R)
\ge
\sum_{n\le\alpha R}
r_{Q^*}(Rn)
\qquad\text{with}\quad
\alpha=\frac{c}{ab}
\end{equation}
where $r_{Q^*}(k)$ is the number of representations of $k$ by the quadratic form $Q^*$. For the remaining proofs we will not need to refer to $Q$ anymore, and therefore we will write $Q$ instead of $Q^*$ for the sake of notational simplicity.

\begin{proof}[Proof of Theorem~\ref{om_re}, case $d = 3$]
Let $r_1,r_2,\dots, r_k$ be the solutions of 
\[
 Q(r,1)\equiv 0\pmod{R}
\]
and for each $1\le j\le k$ and a fixed $0<\delta<1$ define
\[
 C_j=\big\{
 (x, y) \in \mathbb{Z}^2 \; : \; |y| \leq \delta R,\; |x| \leq \delta R,\;  
 x \equiv r_j y \pmod{R}
 \big\}.
\]
Choosing $\delta^2<\frac 14 \lambda^{-1}\alpha$ with $\lambda$ the greatest eigenvalue of the matrix of $Q$, we have  that $Q$ maps $C_j$ into multiples of $R$ less than $\alpha R^2/2$. Hence the sum in \eqref{bdrq} is at least $\#\bigcup_j C_j$. If we restrict  $y$ to $(y,R)=1$ then the sets $C_j$ become disjoint, consequently
\begin{equation}\label{b3au}
 B(R)
 \ge
 k\min_j \#C_j
 -
 k\, \#
 \big\{
 y \in \mathbb{Z} \; : \; |y| \leq  R,\; (y,R)>1
 \big\}.
\end{equation}
For each fixed $j$, consider the remainders of 
$0r_j$, $1r_j$, $2r_j$,\dots, $\lfloor \delta R\rfloor r_j$
when divided by $R$. 
By the pigeonhole principle, if we subdivide $[0,R)$ into $\lceil \delta^{-1}\rceil$ equal subintervals, at least $\delta R/\lceil \delta^{-1}\rceil$ of the remainders lie in the same subinterval. In this way, we have at least 
$\delta R/\lceil \delta^{-1}\rceil$
pairs $(u_\ell, v_\ell)$ such that $0\le v_\ell \le \delta R$ and $u_\ell\equiv r_jv_\ell$ is in a subinterval of length $R/\lceil \delta^{-1}\rceil$. Hence 
$(u_\ell-u_1,v_\ell-v_1)\in C_j$
and it follows $\#C_j\ge \delta R/\lceil \delta^{-1}\rceil$. In this way, \eqref{b3au} assures
\begin{equation}\label{b3au2}
 B(R)
 \ge
 k\frac{\delta^2 R}{{1}+\delta}
 +
 k\big(\varphi(R)-R\big).
\end{equation}
For large $x$, take $R$ as the product of the primes $x\le p\le 2x$ such that 
$\Big(\frac{4\Delta}{p}\Big)=1$ where $\Delta$ is the discriminant of $Q$. Again by the prime number theorem in arithmetic progressions, we have
\begin{equation}\label{pntr}
 \log R\sim \frac{x}{2}
 \qquad\text{and}\qquad
 \frac{\varphi(R)}{R}=
 \prod_{p\mid R}
 \big(1-p^{-1}\big)
 = 1+ O\Big(\frac{1}{\log x}\Big).
\end{equation}
The congruence $Q(r,1)\equiv 0$ admits two solutions modulo each of these primes $p$. Then by our choice  of $R$ we have that $k$ equals $2$ to the number of such primes that is at least $(\log R)/\log(2x)$. Substituting this and \eqref{pntr} in \eqref{b3au2}, we get the expected result. 
\end{proof}

\begin{proof}[Proof of Theorem~\ref{om_re}, case $d = 4$]
Combining Theorem~1 of \cite{blomer} and Theorem~2 of \cite{DuSP} we have
\begin{equation}\label{rrgen}
r_Q(n)
=
r_Q^{\textrm{gen}}(n)
+
O\big(n^{13/28+\epsilon}\big)
\qquad\text{for $n\not\in\mathcal{S}$}
\end{equation}
where $\mathcal{S}$ is a finite union of sets of the form $\{t_jm^2 \,:\, m \in \Z\}$ for some $t_j \in \Z$. 
Here $r_Q^{\textrm{gen}}$ is the average number of representations by forms belonging to the same genus as $Q$ that can be computed with Siegel mass formula (see \cite[\S20.4]{IwKo} for the definitions and details). In Lemma~6 of \cite{ChCrUb} this formula was written as
\begin{equation}\label{rgen}
r_Q^{\textrm{gen}}(n)
=
\frac{4\pi\sqrt{2n}}{\sqrt{D}}
\sum_{d^2\mid n}
d^{-1}{U}(n/d^2)L(1,\chi_{-2Dn/d^2})
\end{equation}
where $D$ is the determinant of $Q$, $L$ is the $L$-function corresponding to the Kronecker symbol $\chi_m$ modulo $m=-2Dn/d^2$ and $U$ is a certain $8D^2$-periodic function  which is nonnegative and not identically zero. 

Let us say $(R,2D)=1$ and for each $d^2\mid R$ choose $n_d$ such that $U(n_dR/d^2)\ne 0$, then \eqref{rrgen} and \eqref{rgen} together with \eqref{rqm} imply 
\begin{equation}\label{rqm}
B(R)
\gg
R
\sum_{d^2\mid R}
d^{-1}\mathcal{L}_d(R)
+
O\big(R^{27/14+\epsilon}\big)
\end{equation}
where 
\[
 \mathcal{L}_d(R)
 =
 \sum_{n\in\mathcal{A}}L(1,\chi_{-2DRn/d^2})
 \qquad\text{with}\quad
 \mathcal{A}
 =
 \big\{
 n\ll R\;:\; n\not\in\mathcal{S},\ n\equiv n_d\pmod{8D^2}
 \big\}.
\]
If $\mathcal{L}_d(R)\gg R$, choosing $R=\prod_{2D<p\le x} p^2$ we have $\log R\sim 2x$ and
\[
B(R)
\gg
R^2
\prod_{2D<p\le x} \big(1+p^{-1}\big)
+
O\big(R^{27/14+\epsilon}\big)
\gg R^2\log\log R.
\]

It remains to prove $\mathcal{L}_d(R)\gg R$. Expanding the $L$-functions, we can write $\mathcal{L}_d(R)$ as
\[
S_1 + S_2 + S_3 :=
\sum_{m_1} \frac 1{m_1} 
\sum_{n\in\mathcal{A}} \chi_{d_n}(m_1)
+ 
\sum_{m_2} \frac{\chi_{-2DR'}(m_2)}{m_2} 
\sum_{n\in\mathcal{A}} \chi_n(m_2)
+ 
\sum_{n\in\mathcal{A}} 
\sum_{m_3} \frac{\chi_{d_n}(m_3)}{m_3}
\]
where $d_n = -2DR'n$, $R' = R/d^2$, 
$m_1$ runs over the squares in $[1,R^{1+\epsilon}]$, $m_2$ over the non-squares coprime to $2DR'$ in the same interval and $m_3>R^{1+\epsilon}$. 
Trivially, $S_1\gg R$. By P\'olya-Vinogradov inequality $S_3\ll \sum_{n\in\mathcal{A}}R^{-\epsilon}\ll R^{1-\epsilon}$.
There are $O(R^{1/2})$ values of $n\ll R$ in $\mathcal{S}$  that when added to $\mathcal{A}$ give a negligible contribution $O(R^{1/2}\log R)$ to $S_2$, and hence we can drop the condition $n\not\in \mathcal{S}$ in $S_2$. On the other hand, the congruence condition $n\equiv n_d$ can be detected inserting 
$\sum_\chi \chi(n)\overline{\chi}(n_d)/\varphi(8D^2)$ where $\chi$ runs over the characters modulo $8D^2$. Since $\gcd(m_2,2DR')=1$, $\psi(n)=\chi(n)\chi_{n}(m_2)$ is a nonprincipal character modulo $8D^2m_2$ and P\'olya-Vinogradov inequality proves $S_2\ll R^{1/2+\epsilon}$. Therefore $\mathcal{L}_d(R)\sim S_1\gg R$.
\end{proof}

\begin{proof}[Proof of Theorem~\ref{om_re}, case $d \geq 5$]
For $d \geq 6$ we have by Corollary~11.3 of \cite{iwaniec} the estimate $r_{Q}(m) \asymp m^{(d-3)/2}$ as long as $m$ is sufficiently large and $Q(\vec{x}) \equiv m$ is solvable modulo $2^7 D^3$ with $D$ the determinant of $Q$. Taking $m = Rn$ with $R$ a large multiple of $2^7 D^3$, both conditions are fulfilled   and the result follows from  \eqref{bdrq}.

If $d = 5$, Corollary~11.3 of \cite{iwaniec} gives for $2^7D^3\mid R$
\begin{equation}\label{b5au}
 B(R)
 \gg 
 R
 \sum_{n\ll R}
 n
 \prod_{p\mid Rn}
 \big(1+\chi_D(p)p^{-1}\big)
 \qquad\text{with }\quad 
 \chi_D(p)=\Big(\frac{D}{p}\Big).
\end{equation}

Let  $P_D$ the product of the primes $p\le x$ such that $\chi_D(p)=1$. By the prime number theorem in arithmetic progressions, we have
\[
 \log P_D\sim \frac{x}{2}
 \qquad\text{and}\qquad
 \prod_{p \mid P_D}
 \big(1+p^{-1}\big)
 \gg \sqrt{\log x}
 \sim\sqrt{\log\log P_D}.
\]
Choosing $R=2^7D^3P_D$ in \eqref{b5au}, we have
\[
 B(R)
 \gg
 R
 \prod_{p\mid P_D}
 \big(1+p^{-1}\big)
 \cdot
 \sum_{n\ll R} n
 \prod_{p\mid n}
 \big(1-p^{-1}\big).
\]
The sum equals that of $\varphi(n)$ (Euler's totient function) that is comparable to $R^2$.
\end{proof}

\textbf{Acknowledgement.} We are grateful to A. D. Mart\'inez for his useful comments on an early version of this article.

\bibliographystyle{plain}

\end{document}